\def\H1{{_0}H^1(\Omega)}
\def\N{\mathbb{N}}
\def\R{\mathbb{R}}
\def\XXint#1#2#3{{\setbox0=\hbox{$#1{#2#3}{\int}$ }
\vcenter{\hbox{$#2#3$ }}\kern-.6\wd0}}
\newtheorem{lem}{Lemma}[section]
\newtheorem{cor}[lem]{Corollary}
\newtheorem{prop}[lem]{Proposition}
\newtheorem{thm}[lem]{Theorem}
\newtheorem{remark}[lem]{Remark}
\newtheorem*{prop*}{Proposition}
\newtheorem*{thm*}{Theorem}
\newtheorem*{def*}{Definition}
\newtheorem*{lem*}{Lemma}
\numberwithin{equation}{section}
\title{Existence for a Cahn-Hilliard Model for Lithium-Ion Batteries with Exponential-Growth Boundary Conditions}
\author{Kerrek Stinson \\  kstinson@uni-bonn.de \\ Hausdorff Center for Mathematics, Universit\"at Bonn}
\begin{document}
\pagestyle{plain}
\setcounter{page}{1}

 \begin{titlingpage}
    \maketitle
    \begin{abstract}
The Cahn-Hilliard reaction model, a nonlinear, evolutionary PDE, was introduced to model phase separation in lithium-ion batteries. Using Butler-Volmer kinetics for electrochemical consistency, this model allows lithium-ions to enter the domain via a nonlinear Robin-type boundary condition $\partial_\nu \mu = R(c,\mu)$ for the chemical  potential $\mu$, with $c$, the lithium-ion density. Importantly, $R$ depends exponentially on $\mu$. Fixed point methods are applied to obtain existence of regular solutions of the Cahn-Hilliard reaction model in dimension $3,$ allowing for recovery of exponential boundary conditions as in the physical application.
    \end{abstract}
 \noindent    \textbf{Key words:} Cahn-Hilliard, fixed point, lithium-ion batteries
    
  \noindent   \textbf{AMS Classifications:} 35A01, 35G25, 49J99, 74N99

\end{titlingpage}

\newpage

\section{Introduction}

Lithium-ion batteries are a critical technological tool, used everyday in cellphones, laptops, and cars, but they are also closely interwined with the push towards renewable energies, which requires energy storage for intermittent power sources such as the sun or wind. In recognition of the current eminence of this technology, the 2019 Nobel Prize in chemistry was awarded to Goodenough, Whittingham, and Yoshino for pioneering development in the theory and scalable industrialization of lithium-ion batteries \cite{nobelPrize}. Despite their ubiquity, batteries are still far from reaching their limits. One prominent challenge associated with lithium-ion batteries is a reduced life cycle resulting from phase transitions occurring during use:  During the process of charging, lithium-ions intercalate into the host material of the cathode heterogeneously, giving rise to regions of either high or low lithium-ion density with very little in between. This process is referred to as phase separation and induces a strain on the material, which eventually degrades the cathode. Damage of the cathode's host material diminishes battery performance and shortens the life-cycle (see \cite{Bazant-Theory2013}, \cite{Dal2015-Comp}, and references therein).

Understanding the formation of phase transitions is essential to improvement in battery design and performance. From the perspective of modeling, much has been done in this direction using diffuse interface phase field models. Phase field models are often governed by variational principles arising from a global energy functional which depends on regular inputs (e.g. Sobolev functions). As noted in \cite{Bazant-Theory2013}, the phase field field model is robust, allowing for electrochemically consistent models for the time evolution of lithium-ion batteries. Competing models include the shrinking core model and the sharp interface model. However, as noted in Burch et al. \cite{Burch-PhaseTransformation2008}, the shrinking core model fails to capture fundamental qualitative behaviors such as anisotropy. Furthermore, in \cite{HanVanDerVen2004} it is proposed that the phase field model may provide a more accurate numerical analysis of the problem than the sharp interface model, which seeks to model the evolution of the phase boundary as a free boundary problem (see \cite{caginalp1991}; see also \cite{Acharya-msThesis}, and references therein, for benefits of the phase field model).

The Cahn-Hilliard Reaction (CHR) model,
\begin{align}
\label{pde:CHRnoElastic}
\text{CHR model} \ &\left\{ \begin{aligned}
& \partial_t c = \Delta \mu  &\text{in } \Omega \times (0,T),\\
&  \mu = -\Delta c +f'(c)  &\text{in } \Omega \times (0,T),\\
& \partial_\nu c = 0 & \text{on }\partial \Omega \times (0,T), \\
& \partial_\nu \mu = R(c,\mu ) &\text{on }\partial \Omega \times (0,T), \\
& c(0) = c_0 & \text{in } \Omega,
\end{aligned}\right.
\end{align} was introduced in a series a papers by Bazant et al.  (see \cite{Bazant-Theory2013}, \cite{burch2009size}, \cite{cogswell2012coherency}, \cite{singh2008intercalation}, \cite{Bazant-PhaseSepDyn2014}) to model the evolution of lithium-ions within a nanoparticle of a lithium-ion battery. Novelly, this model incorporates Butler-Volmer kinetics, which arise in (\ref{pde:CHRnoElastic}) as the nonlinear, high-order boundary condition $\partial_\nu \mu = R(c,\mu)$. In contrast to the Cahn-Hilliard equation, the CHR model is not mass-preserving and allows for lithium-ions to enter the domain. The purpose of this paper is to prove a short-time existence theorem in dimension $N=3$ for the CHR model with $f$ and $R$ as in the physical application \cite{singh2008intercalation}, that is, given by 
\begin{equation} \label{def:chemPotFunc}
f(s)  := \omega s(1-s)+KT_{\rm{abs}}(s\log(s) +(1-s)\log(1-s)),  \quad \quad s\in [0,1],
\end{equation}  
and
\begin{equation}\label{def:Rbazant}
R(s,w) := R_{\rm{ins}}-R_{\rm{ext}}= k_{\rm{ins}}\exp(\beta (\mu_e-w)) - k_{\rm{ext}}s\exp(\beta(w-\mu_e)), \quad \quad s\in (0,1), \ w\in \R,
\end{equation}
where $\omega, K, T_{\rm abs}, k_{\rm ins}, k_{\rm ext}, \mu_e, \beta>0$ are physical parameters. We do not include a parameter representing phase interface width; though as we are not considering the sharp interface limit (see, e.g., \cite{Abels2009ExistenceOW}, \cite{AlikakosBatesChen}, \cite{garcke-CurvDrivenEvo}, \cite{stinsonLiBatteryGamma}), this can be done.

Given the prevalence of phase separation in many applications, there is a wealth of literature studying Cahn-Hilliard type equations. Existence of strong solutions was first proven by Elliott and Songmu for $f(s) = (s^2-1)^2$ \cite{elliottCH}. Subsequently, Elliott and Luckhaus extended existence to the case of a logarithmic potential (\ref{def:chemPotFunc}) \cite{elliottLuckhaus-logPot}, which still frequently presents a challenge for existence of solutions to related models (see, e.g., \cite{cherfilsMiranvilleZelik}, \cite{davoliScarpaTrussardi}, \cite{garcke-CHlogPot}). A topic of recent interest, and closer in spirit to the CHR model, is that of the Cahn-Hilliard equation with dynamic boundary conditions, which for example could be 
\begin{equation}\label{eqn:dynBC}
\partial_\nu c = -\partial_t c + \kappa \Delta_{\Gamma} c - g'(c) \quad \quad \text{on } \partial \Omega \times (0,T),
\end{equation}
where $ \Delta_{\Gamma}$ is the Laplace-Beltrami operator and $g$ is a surface potential (see \cite{garcke-CHdynamicBC}, \cite{LiuWu-DynamicBC}, and references therein). As can be seen from the Allen-Cahn type equation in (\ref{eqn:dynBC}), these models incorporate phase separation in the bulk and on the surface. To prove existence for this type of model, the Cahn-Hilliard free energy, 
\begin{equation}
I[c] : = \int_\Omega f(c) + \frac{1}{2} \|\nabla c\|^2 \, dx,
\end{equation} is modified to include a surface potential, which gives rise to the above boundary condition when taking the first variation of the energy. Interestingly, boundary conditions emulating Cahn-Hilliard phase separation on the surface can be accounted for by including a surface term in the metric of the gradient flow \cite{garcke-CHdynamicBC}. 

Studying a viscous variant of the Cahn-Hilliard reaction model (\ref{pde:CHRnoElastic}), Kraus and Roggensack developed a variational approach to existence of solutions via a doubly-nonlinear differential inclusion (see also \cite{mielke-doublyNonlinearEvo}), which redefines $\mu$ as $\mu:= -\Delta c +f'(c) + \delta \partial_t c$ for $\delta >0$ \cite{kraus2017}. Using convex duality techniques, Kraus and Roggensack introduce a novel framework for encapsulating the high-order boundary condition within a modified metric emulating the $H^{1}$-dual norm. The $H^{1}$-dual norm gives rise to the gradient flow structure for the Cahn-Hilliard equation (see \cite{fife-CHgradFlow}), and is used in minimizing movements schemes to prove existence of solutions (see also \cite{AGS-GradFlows}). For Kraus and Roggensack, it is no different, and they prove existence of solutions to the viscous CHR model via minimizing movements. Critically the compactness argument, to obtain convergence of the approximate solutions, prevents the passage of the viscosity term $\delta\to 0$ in the limit. Recently, the author introduced an alternative compactness argument to extend this variational approach to the CHR model (\ref{pde:CHRnoElastic}) without viscosity \cite{stinsonExistence}. However, given the energetic nature of the aforementioned approaches, it is unsurprising that these existence theorems are restricted to polynomial growth of $R$. 
 
Herein, we develop an existence and regularity theory via a fixed point method, allowing for recovery of the electrochemically consistent exponential boundary conditions with $R$ given by (\ref{def:Rbazant}) and $f$ given by the thermodynamically relevant density (\ref{def:chemPotFunc}). The primary result is as follows (see Section \ref{sec:prelim} for the definition of $H^{r,s}$).

\begin{thm}\label{thm:CHRexist3D}
Suppose $\Omega \subset \R^3$ is an open, bounded set with smooth boundary. Let $f$ and $R$ be as in (\ref{def:chemPotFunc}) and (\ref{def:Rbazant}), and $c_0\in H^4(\Omega)$ satisfy 
\begin{equation}\label{eqn:initRestrict}
\epsilon < c_0(x)  <1-\epsilon \quad  \text{ for a.e. } \quad x\in \Omega,
\end{equation}
 with 
 \begin{equation}\label{eqn:initCompat}
 \partial_\nu c_0 = 0 \quad \text{and} \quad \partial_\nu (\Delta c_0) = -R(c_0,-\Delta c_0 +f'(c_0)) \quad \text{on} \quad \partial \Omega.
 \end{equation} Then there is a solution $c \in H^{6,1+1/2}(\Omega\times (0,T))$ of the Cahn-Hilliard reaction model (\ref{pde:CHRnoElastic}) for some $T=T(c_0)>0$.
\end{thm}

This is the first result, of which the author is aware, proving existence of a solution for the CHR model with the exponential nonlinearity in the boundary condition, as given by (\ref{def:Rbazant}), and in the physcially relevant dimension $N=3$. The approach adopted herein has the advantage of obtaining existence for high-order nonlinear boundary conditions, but to account for the singularity of $f'$ at $0$ and $1$, we must limit our consideration to initial conditions $c_0$ satisfying (\ref{eqn:initRestrict}).
This is the primary `unphysical' hypothesis required for our result, as the conditions $\partial_\nu c_0 = 0$ and $\partial_\nu (\Delta c_0) = -R(c_0,-\Delta c_0 +f'(c_0))$ describe a necessary compatibility between the boundary conditions and initial condition (see Theorem \ref{thm:SPEwithBC} and Remark \ref{rmk:BCcompat}). It is natural to speculate that the approach to prove Theorem \ref{thm:CHRexist3D} combined with truncation arguments as in \cite{elliottLuckhaus-logPot}, \cite{garcke-CHlogPot} relying on the logarithmic potential could provide the necessary maximum principle to lift assumption (\ref{eqn:initRestrict}); however, in practice, this is complicated by the dependence of $R$ on $\mu$, and thereby $f'$. 
Alternatively, assuming $f\in C^5(\R)$, assumption (\ref{eqn:initRestrict}) may be dropped in the statement of Theorem \ref{thm:CHRexist3D}. This type of assumption is commonly made for Cahn-Hilliard equations, with a classical choice being $f(s):= s^2(1-s)^2$. Finally, as our focus is on recovering solutions to the diffuse interface model with exponential boundary conditions, we only consider constant mobility but imagine that a similar program, with the right parabolic estimates, could account for non-constant mobility (see, e.g., \cite{dai-CHdegenMobility}, \cite{elliott-CHdegenMobility}).

\begin{remark}
Though variational structure can provide a powerful tool, part of the strength of our result is that it does not depend on any underlying variational structure for the PDE. Consequently, Theorem \ref{thm:CHRexist3D} holds for any regular $f \in C^5$ and $R\in C^3$ (domains unspecified) such that the compatibility conditions (\ref{eqn:initCompat}) hold along with the range of $c_0$ and $\mu_0 := -\Delta c_0 + f'(c_0)$ being bounded away from singularities of $f$ and $R$, as in (\ref{eqn:initRestrict}). Thus this also provides existence of regular solutions for short time for Marcus Kinetics (and its generalization, see \cite{Bazant-Theory2013}) with $R$, up to constants, given by $R(\mu) =\exp(-\mu^2)(\exp(-\mu) -\exp(\mu)) $.
\end{remark}

\begin{remark}
We note that Theorem \ref{thm:CHRexist3D} also holds for dimension $N=2$, with modifications to the proof only occuring when using embeddings with dimension dependent parameters. 
\end{remark}

Though we will introduce this notation precisely in the preliminaries, so as to provide a brief discussion of the proof of Theorem \ref{thm:CHRexist3D}, let $H^{r,s}(\Omega \times (0,T)) = H^{r,s}(\Omega_T)$ denote the anisotropic Sobolev space of functions with $r$ derivatives in the spatial domain $\Omega\subset \R^3$ and $s$ derivatives in the time interval $(0,T)$. We will show that there is a solution, $c$, of a truncated version of the CHR model (see (\ref{pde:truncCHR})) belonging to the space $H^{6,1+1/2}(\Omega_T)$, which by trace and embedding theorems will be sufficient to obtain continuity of $\mu$ in space and time and recover the exponential boundary condition encapsulated by $\partial_\nu \mu = R(c,\mu)$. To obtain a solution in this higher regularity Sobolev space, we could hope to prove existence of a strong solution in a lower regularity space, such as $H^{4,1}(\Omega_T)$, and then use the structure of the CHR model to directly bootstrap. However, in Remark \ref{rmk:bootstrapNecessity}, we will see that due to the nonlinearity of the boundary condition such an approach is not feasible without some further insight. To circumnavigate this challenge, we will prove existence by showing that a PDE-defined operator satisfies the hypotheses of Schaefer's fixed point theorem \cite{EvansPDE}. The crux of this argument is showing that any solution of the truncated variant of the CHR model (see (\ref{pde:truncCHR})) satisfies a uniform bound in $H^{5,1+1/4}(\Omega_T)$. Obtaining this bound requires the bulk of our attention.

We briefly outline the contents of this paper. In Section \ref{sec:prelim}, we introduce notation and recall a variety of estimates and embeddings for fractional Sobolev spaces and anistropic Sobolev spaces. In Section \ref{sec:regular}, we jump straight to the heart of our proof and develop the fixed point argument to prove existence of a solution to the CHR model in $H^{5,1+1/4}(\Omega_T)$. Subsequently, in Section \ref{sec:regular2}, a bootstrapping argument recovers the remaining regularity, finding a solution in $H^{6,1+1/2}(\Omega_T)$, and thereby completing the proof of Theorem \ref{thm:CHRexist3D}. In the course of the fixed point argument, we use an a priori estimate regarding the continuity of solutions to a truncated CHR model (see Theorem) whose proof is delayed till Section \ref{sec:aprioriEst}.

\section{Preliminaries}\label{sec:prelim}

We introduce function spaces in Subsections \ref{subsec:fracSobolev} and \ref{subsec:anisoSobolev}, which will be essential in our analysis. Here, we remind the reader of the classical fractional Sobolev spaces in $1$-dimension, but pay special attention to how estimates scale with respect to $T>0$. We then recall a class of anisotropic Sobolev spaces used by Lions and Magenes \cite{LM-v2}. Integrating our knowledge of the two spaces, we use slicing in the temporal variable to introduce an equivalent semi-norm for the anisotropic Sobolev spaces\iffalse to be used in Section \ref{sec:aprioriEst}\fi . In Subsection \ref{subsec:parabolic}, we highlight the Gagliardo-Nirenberg inequality and recall an existence and regularity theorem for $4$th-order in-homogeneous parabolic equations. The excited reader may wish to skip the proofs contained in this section and return as necessary.

\subsection{Notation}
We enumerate notation used throughout the paper. We use $C$ to denote a generic constant, which can change from line to line. If dependence of $C$ on parameter $a$ is emphasized, we will denote this by either $C_a$ or $C(a)$. Given a domain $\Omega\subset \R^N$ specified by context, we define $\Gamma : =\partial \Omega.$ We let $\nu$ denote the outward normal of $\Gamma$. We say that a set $\Omega\subset \R^N$ has smooth boundary if, for every $x\in \Gamma$ there is some $r>0$, such that up to rotation, $B(x,r)\cap \Omega$ coincides with the epigraph of a $C^{\infty}(\R^{N-1})$ function. As is standard, we let $p^* := \frac{Np}{N-p}$ be the critical exponent for the Sobolev embedding in dimension $N>p.$ Further for $T>0,$ we define $\Omega_T : = \Omega \times (0,T)$ and set $\Sigma_T : =\Gamma \times (0,T).$ Given Banach spaces $(\mathcal{B}_0,\|\cdot \|_0)$ and $(\mathcal{B}_1,\|\cdot \|_1)$, we denote the continuous embedding of $\mathcal{B}_0$ into $\mathcal{B}_1$ by $\mathcal{B}_0\hookrightarrow \mathcal{B}_1$, and the compact embedding of $\mathcal{B}_0$ into $\mathcal{B}_1$ by $\mathcal{B}_0\hookrightarrow\hookrightarrow \mathcal{B}_1$. Given also $a<b \in \R$, with $J := (a,b)$, let $L^p(a,b; \mathcal{B}_0) = L^p(J; \mathcal{B}_0)$ denote the space of Bocher $p-$integrable functions on $J$ with values in $\mathcal{B}_0.$ For a good resource on such spaces, we refer the reader to \cite{LeoniBook}. Likewise for $T>0$, let $C([0,T];\mathcal{B}_0)$ denote the space of bounded uniformly continuous functions with values in $\mathcal{B}_0$ on the closed interval $[0,T].$

\subsection{Fractional Sobolev Spaces}\label{subsec:fracSobolev}

\iffalse In this subsection, we develop a sufficient knowledge of fractional Sobolev spaces to introduce a non-standard norm for anisotropic Sobolev spaces in Subsection \ref{subsec:anisoSobolev} (see (\ref{def:besovAnisoNorm})). As such, our consideration is restricted to one-dimensional fractional Sobolev spaces $H^s(0,T)$ for $s\in (0,1)$. We note, in this paper, for encounters with higher order fractional derivatives, it will suffice to use the interpolation methods introduced in Section \ref{subsec:anisoSobolev} to define $H^s(0,T)$ for $s \geq 1$.
\fi

In the interval $(a,b) \subset \R$, for $u\in H^1(a,b)$ and $s\in (0,1)$, we define a semi-norm via the difference quotient introduced by Gagliardo (see, e.g., \cite{diNezza-hitchhiker}, \cite{gagliardo-fracSobolevSpace}, \cite{LeoniBook}),
\begin{equation}\label{def:fracSeminorm}
|u|_{H^{s}(a,b)} : = \left( \int_a^b \int_a^b \frac{|u(x) - u(y)|^2}{|x-y|^{1+2s}} \ dy \ dx \right)^{1/2}.
\end{equation}
We then define the norm 
\begin{equation}\label{def:fracNorm}
\|u\|_{H^s(a,b)}:=\|u\|_{L^2(a,b)}+|u|_{H^{s}(a,b)}.
\end{equation}
The completion of $H^1(a,b)$ in $L^{2}(a,b)$ with respect to the norm (\ref{def:fracNorm}) is the fractional Sobolev space of order $s$, denoted by $H^s(a,b).$
We prove a result for one-dimensional fractional Sobolev spaces.
\begin{prop}
Suppose $u\in H^{s}(0,T)$ and $\psi \in C^\infty[0,T].$ Then $\psi u \in H^s(0,T)$, and for any $\epsilon \in (0,T)$, it satisfies the bound
\begin{equation}\nonumber
\|\psi u \|_{H^s(0,T)} \leq C((1+\epsilon^{-s})\|\psi\|_\infty  + \epsilon^{1-s}\|\nabla \psi\|_\infty )\|u\|_{L^2(0,T)} + \|\psi\|_\infty|u|_{H^s(0,T)}.
\end{equation}
\end{prop}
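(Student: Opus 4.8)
The plan is to reduce the statement about the product $\psi u$ to estimating the three pieces that arise when one plugs $\psi u$ into the norm \eqref{def:fracNorm}: the $L^2$ part, and then the Gagliardo seminorm \eqref{def:fracSeminorm} split according to the algebraic identity $\psi(x)u(x)-\psi(y)u(y) = \psi(x)\bigl(u(x)-u(y)\bigr) + u(y)\bigl(\psi(x)-\psi(y)\bigr)$. The first term gives $\|\psi\|_\infty |u|_{H^s(0,T)}$ immediately, which is exactly the last summand in the claimed bound. The $L^2$ part is trivial: $\|\psi u\|_{L^2(0,T)} \le \|\psi\|_\infty \|u\|_{L^2(0,T)}$, which is absorbed into the first group of terms (the coefficient there is $\gtrsim \|\psi\|_\infty$). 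So everything comes down to controlling
\[
\left( \int_0^T \int_0^T \frac{|u(y)|^2\,|\psi(x)-\psi(y)|^2}{|x-y|^{1+2s}}\,dy\,dx \right)^{1/2}
\]
by $C\bigl((1+\epsilon^{-s})\|\psi\|_\infty + \epsilon^{1-s}\|\nabla\psi\|_\infty\bigr)\|u\|_{L^2(0,T)}$.

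The key step is to split the inner $x$-integral at the scale $\epsilon$: over $\{|x-y|<\epsilon\}$ use the Lipschitz bound $|\psi(x)-\psi(y)| \le \|\nabla\psi\|_\infty |x-y|$, so the integrand is $\le \|\nabla\psi\|_\infty^2 |x-y|^{1-2s}$, and $\int_{|x-y|<\epsilon} |x-y|^{1-2s}\,dx \le C\epsilon^{2-2s}$ (here $1-2s > -1$ since $s<1$, so the integral converges). Over $\{|x-y|\ge\epsilon\}$ use the crude bound $|\psi(x)-\psi(y)| \le 2\|\psi\|_\infty$, giving integrand $\le 4\|\psi\|_\infty^2 |x-y|^{-1-2s}$ and $\int_{|x-y|\ge\epsilon} |x-y|^{-1-2s}\,dx \le C\epsilon^{-2s}$ (convergent at infinity since $-1-2s<-1$; the integral over all of $\mathbb{R}\setminus(-\epsilon,\epsilon)$ dominates the one over the bounded domain). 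Then pull out the resulting $y$-independent constants and integrate $|u(y)|^2$ over $(0,T)$, obtaining a bound by $C\bigl(\epsilon^{1-s}\|\nabla\psi\|_\infty + \epsilon^{-s}\|\psi\|_\infty\bigr)\|u\|_{L^2(0,T)}$. Combining with the $L^2$ term and the $\psi(x)(u(x)-u(y))$ term, and using subadditivity of the square root to pass from the sum of squared integrals to the sum of the two seminorm-type quantities, yields exactly the stated inequality.

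The only mild subtlety — and the place I'd be most careful — is bookkeeping the domain of integration: the integrals above are over $x,y \in (0,T)$, not over all of $\mathbb{R}$, so one should note that restricting to a bounded interval only decreases the integrals $\int |x-y|^{1-2s}\,dx$ and $\int_{|x-y|\ge\epsilon}|x-y|^{-1-2s}\,dx$, hence the constants $C\epsilon^{2-2s}$ and $C\epsilon^{-2s}$ (with $C$ absolute, or at worst depending on $s$) remain valid uniformly in $y\in(0,T)$. One also needs $\epsilon \le T$ (as assumed) so that the near-diagonal region is nonempty and the split is meaningful, though the estimates hold regardless. No deep input is required beyond the definitions; the $\psi\in C^\infty[0,T]$ hypothesis is used only through $\|\psi\|_\infty$ and $\|\nabla\psi\|_\infty$, so $\psi\in C^1([0,T])$ would suffice.
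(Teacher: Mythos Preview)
Your proposal is correct and follows essentially the same approach as the paper: write $\psi(x)u(x)-\psi(y)u(y)=\psi(x)(u(x)-u(y))+u(y)(\psi(x)-\psi(y))$, bound the first piece by $\|\psi\|_\infty|u|_{H^s}$, and split the second piece into near-diagonal ($|x-y|\le\epsilon$, use the Lipschitz bound) and far-diagonal ($|x-y|>\epsilon$, use the $L^\infty$ bound) regions. The paper carries out exactly this computation; your remarks on the domain bookkeeping and on $\psi\in C^1$ sufficing are accurate refinements.
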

\begin{proof}
As the control of the $L^2$ norm of $\psi u$ is straightforward, we estimate the seminorm $|\psi u |_{H^s(0,T)}$ as follows.
\begin{equation}\nonumber
\begin{aligned} 
\int_0^T\int_0^T \frac{|\psi (x)u(x)-\psi(t)u(t)|^2}{|x-t|^{1+2s}} \ dx \ dt \leq & 2\int_0^T \int_0^T |\psi(x)|^2\frac{|u(x)-u(t)|^2}{|x-t|^{1+2s}} \ dx \ dt \\
& +2\iint_{\{|x-t|\leq \epsilon\}} |u(t)|^2\frac{|\psi(x)-\psi(t)|^2}{|x-t|^{1+2s}} \ dx \ dt \\
& +2\iint_{\{|x-t|> \epsilon\}} |u(t)|^2\frac{|\psi(x)-\psi(t)|^2}{|x-t|^{1+2s}} \ dx \ dt.
\end{aligned}
\end{equation}
To bound the terms of the right-hand side, we immediately have
\begin{equation}\nonumber
\int_0^T \int_0^T |\psi(x)|^2\frac{|u(x)-u(t)|^2}{|x-t|^{1+2s}} \ dx \ dt  \leq \|\psi\|_\infty^2 |u|_{H^s(0,T)}^2.
\end{equation}
For the second term, we use the mean value theorem and Fubini's theorem to find
\begin{equation}\nonumber
\begin{aligned}
\iint_{\{|x-t|\leq \epsilon\}} |u(t)|^2\frac{|\psi(x)-\psi(t)|^2}{|x-t|^{1+2s}} \ dx \ dt  \leq & \|\nabla \psi\|_\infty^2 \int_{0}^T|u(t)|^2 \left( \int_{\{x\in (0,T):|x-t|\leq \epsilon\}} |x-t|^{1-2s} \ dx\right) dt  \\
\leq &\|\nabla \psi\|_\infty^2 \int_{0}^T|u(t)|^2\ dt  \int_{-\epsilon}^\epsilon |\zeta|^{1-2s} \ d\zeta \\
= & C \|\nabla \psi\|_\infty^2 \|u\|_{L^2(0,T)}^2 \epsilon^{2-2s}.
\end{aligned}
\end{equation}
The third term is directly estimated as 
\begin{equation}\nonumber
\begin{aligned}
\iint_{\{|x-t|> \epsilon\}} |u(t)|^2\frac{|\psi(x)-\psi(t)|^2}{|x-t|^{1+2s}} \ dx \ dt \leq & C\|\psi\|^2_\infty \epsilon^{-2s} \|u\|_{L^2(0,T)}^2.
\end{aligned}
\end{equation}
Combining the above inequalities, we conclude the lemma.
\end{proof}
As a result of the above lemma and a reflection argument, we obtain the following extension result.
\begin{cor}\label{cor:fracExtension}
Let $0<T\leq T_0$. Suppose that $u\in H^{s}(0,T).$ There exists an extension of $u$, $\tilde{u}\in H^{s}(0,T_0)$, such that
\begin{equation}\label{bdd:fracExtension}
\|\tilde{u} \|_{H^s(0,T_0)} \leq C_{T_0}\Big((1+T^{-s} )\|u\|_{L^2(0,T)} +|u|_{H^s(0,T)}\Big),
\end{equation}
with a constant $C_{T_0}>0$ independent of $T$.
\end{cor}

We prove an estimate that is helpful for comparing the semi-norm of a fractional Sobolev space to that of the standard Sobolev space.
\begin{prop}\label{prop:BesovEst}
Given $u\in H^{1}(0,T)$ and $s\in (0,1),$ we have the following estimate:
\begin{equation}\nonumber
|u|_{H^{s}(0,T)}\leq \frac{1}{s\sqrt{2(1-s)}} T^{1-s}\|\partial_t u\|_{L^2(0,T)}.
\end{equation}
\end{prop}
\begin{proof}
By definition of the semi-norm and the fundamental theorem of calculus,
\begin{equation} \label{def:BesovSemi}
\begin{aligned}
|u|_{H^{s}(0,T)}^2  =& \int_0^T \int_0^T \frac{|u(x) - u(y)|^2}{|x-y|^{1+2s}} \ dy \ dx \\
= &  \int_0^T \left( \left(\int_0^x  + \int_{x}^T\right)\frac{|\int_x^y \partial_t u \ d\sigma |^2}{|x-y|^{1+2s}} \ dy \right) dx.
\end{aligned}
\end{equation}
Fixing $x\in (0,T),$ we bound the $x$ variable's integrand by the change of variables $\bar y = y-x$ and $\bar \sigma = \sigma - x$, and applying Hardy's inequality (see \cite{LeoniBook}). To be precise,
\begin{equation}\nonumber
\begin{aligned}
\int^T_x  \frac{|\int_x^y \partial_t u \ d\sigma |^2}{|x-y|^{1+2s}} \ dy &= \int_0^{T-x} \frac{|\int_x^{x+\bar y} \partial_t u \ d\sigma |^2}{|\bar y|^{1+2s}} \ d\bar y \\
& =\int_0^{T-x} \frac{|\int_0^{\bar y} \partial_t u(x+\bar \sigma) \ d\bar \sigma |^2}{|\bar y|^{1+2s}} \ d\bar y \\
& \leq (1/s)^2\int_0^{T-x}\bar y^{1-2s} |\partial_t u(x + \bar y)|^2 \ d \bar y \\
& = (1/s)^2\int_x^{T} |x-y|^{1-2s} |\partial_t u(y)|^2 \ d y.
\end{aligned}
\end{equation}
By the same argument for the other integral, we obtain
\begin{equation}\nonumber
\begin{aligned}
|u|_{H^{s}(0,T)}^2 \leq & (1/s)^2\int_0^T\int_0^T|x-y|^{1-2s} |\partial_t u(y)|^2 \ dy \ dx \\
= & (1/s)^2\int_0^T\left(\int_0^T|x-y|^{1-2s} \ dx \right) |\partial_t u(y)|^2 \ dy \\
\leq & \left(\frac{1}{s\sqrt{2(1-s)}}\right)^2 T^{2-2s} \|\partial_t u\|_{L^2(0,T)}^2,
\end{aligned}
\end{equation}
concluding the result.
\end{proof}

We lastly make note of a simple lemma, which shows how the semi-norm changes for a rescaled domain.

\begin{lem}\label{lem:fracNormStretch} 
Let $0<T$, $s\in (0,1)$, and $u\in H^s(0,T)$. Define $u_T(x):=u(Tx)$ for $x\in (0,1)$. Then
$$ |u|_{H^s(0,T)} = T^{\frac{1-2s}{2}} |u_T|_{H^s(0,1)}.$$
\end{lem}
\begin{proof}
We obtain
\begin{equation}\nonumber
\begin{aligned}
|u|_{H^s(0,T)}^2 = & \int_0^T \int_0^T \frac{|u(x) - u(y)|^2}{|x-y|^{1+2s}} \ dx \ dy \\
= & \int_0^1 \int_0^1 \frac{|u(Tx) - u(Ty)|^2}{|Tx-Ty|^{1+2s}} \ T dx \ T dy =  T^{1-2s} |u_T|_{H^s(0,1)}^2.
\end{aligned}
\end{equation}
\end{proof}

\subsection{Anisotropic Sobolev spaces}\label{subsec:anisoSobolev}
\iffalse Anisotropic Sobolev spaces naturally arise in the study of PDEs. \fi Let $\Omega\subset \R^N$ be a smooth domain. 
For $(a,b) \subset \R$, $r,s\geq 0$, we define 
\begin{equation}\label{def:anisoSobolevSpace}
H^{r,s}(\Omega \times (a,b)):=L^2(a,b; H^{r}(\Omega))\cap H^s(a,b;L^2(\Omega)),
\end{equation}
where $H^{r}(\Omega)$ and $H^s(a,b;L^2(\Omega))$ are defined via interpolation of Sobolev spaces of integer order (see \cite{LM-v2}).
Note that $H^0(\Omega) = L^2(\Omega)$. Recalling the notation $\Omega_T : = \Omega \times (0,T),$ $\Sigma_T := \Gamma \times (0,T),$ we have
\begin{equation}\nonumber
H^{r,s}(\Omega_T):=L^2(0,T; H^{r}(\Omega))\cap H^s(0,T;L^2(\Omega)).
\end{equation}
Likewise, we may define the anisotropic Sobolev space with domain $\Sigma_T.$ It is standard to endow $H^{r,s}(\Omega \times (a,b))$ with the norm arising from interpolation (denoted by ``$,I$") given by
\begin{equation}\label{def:InterpNorm}
\|u\|_{H^{r,s}(\Omega_T),I} := \left(\int_a^b \|u(\cdot,t)\|_{H^{r}(\Omega),I}^2 \ dt  + \|u\|_{H^s(a,b;L^2(\Omega)),I}^2 \right)^{1/2}.
\end{equation}
As an aside, we recall how interpolation and the interpolation norm are defined. 
\begin{remark}\label{rmk:interp}
Generically, suppose that $X$ and $Y$ are separable Hilbert spaces, with $X$ densely embedded into $Y.$ There is a positive, self adjoint, and unbounded operator $\Lambda$ on $Y$ such that $X = \rm{dom}(\Lambda)$, and the norm on $X$ is equivalent to the graph norm of $\Lambda,$ i.e.,
\begin{equation}\nonumber
\frac{1}{C} \|x\|_X\leq (\|x\|_Y^{2}+\|\Lambda x\|_Y^{2})^{1/2} \leq C \|x\|_X.
\end{equation}
For the construction of such an operator, we refer the reader to \cite{dautrayLions-v3}, \cite{LM-v1}, \cite{rieszNagy-FA}. Using spectral theory for unbounded operators (see \cite{dautrayLions-v3} and references therein), we may consider fractional powers of the operator $\Lambda.$ Then the interpolation space $[X,Y]_\theta$ for $\theta\in [0,1]$ is defined by 
\begin{equation}\label{def:interpSpace}
[X,Y]_\theta := \rm{dom}(\Lambda^{1-\theta}),
\end{equation}
with norm
\begin{equation}\label{def:XYinterpNorm}
\|\cdot\|_{[X,Y]_\theta} := (\|\cdot\|_Y^2 + \|\Lambda^{1-\theta}\cdot\|_Y^2)^{1/2}.
\end{equation}
In the context of Sobolev spaces (see Proposition \ref{prop:anisoInterp}), for example, we have 
\begin{equation}\nonumber
\|\cdot\|_{H^{1/2}(\Omega),I} := \|\cdot\|_{[H^1(\Omega),H^{0}(\Omega)]_{1/2}}. 
\end{equation}
We note that the interpolation space defined by (\ref{def:interpSpace}) is norm equivalent to that defined by the K-method (see, e.g., \cite{LeoniBook}, \cite{LM-v1}). However, the norm (\ref{def:XYinterpNorm}) is more directly related to norms arising from the Fourier transform.
\end{remark}

For $u\in H^{s}(\R;L^2(\Omega))$, we may consider the Fourier transform of $u$ in the variable $t$ given by the Bochner integral
\begin{equation}\nonumber
\hat{u}(\xi) : =\frac{1}{(2\pi)^{1/2}}\int_\R e^{-i\xi t}u(t) \, dt,
\end{equation}
and define the Fourier norm (denoted by ``$,{\rm F}$") on the space $H^{s}(\R; L^2(\Omega))$ (see also \cite{LM-v1}) by
\begin{equation}\nonumber
\|u\|_{H^{s}(\R;L^2(\Omega)),\rm{F}} : =\left(\int_\R (1+|\xi|^{2})^{s}\|\hat{u}(\xi)\|_{L^2(\Omega)}^2 \, d \xi \right)^{1/2}.
\end{equation}

We will need precise results about the extension properties of the function spaces in (\ref{def:anisoSobolevSpace}). Consequently, norms akin to (\ref{def:BesovSemi}) will prove more useful than (\ref{def:InterpNorm}). Let $\|\cdot \|_{H^{r}(\Omega)}$ denote any standard norm choice for $H^{r}(\Omega).$
\begin{lem}
Let $\Omega\subset \R^N$ be an open, bounded set with smooth boundary, and $r\geq 0$. For $s\in (0,1),$ we recall (\ref{def:fracNorm})  to define the norm
 \begin{equation} \label{def:besovAnisoNorm}
 \|u\|_{H^{r,s}(\Omega_T)} := \left(\int_0^T \|u(\cdot,t)\|_{H^{r}(\Omega)}^2 \ dt  + \int_\Omega \|u(x,\cdot)\|_{H^{s}(0,T)}^2 \ dx \right)^{1/2}.
 \end{equation}
 The norm (\ref{def:besovAnisoNorm}) is equivalent to the norm defined by (\ref{def:InterpNorm}). The same is true on domains $\Sigma_T.$
\end{lem}
\begin{remark}
Note the norm (\ref{def:besovAnisoNorm}) is naturally extended to $H^{r,s}(\Omega_T)$ with $s\in (k,k+1)$ for $k\in \N$ by considering the norm $$\left(\|u\|_{H^{r,k}(\Omega_T)}^2 + \int_\Omega \|\partial_t^k u(x,\cdot)\|_{H^{s-k}(0,T)}^2\right)^{1/2}.$$
\end{remark}
\begin{proof}
By classical results, we have that $\|\cdot \|_{H^r(\Omega),I}$ is equivalent to $\|\cdot\|_{H^{r}(\Omega)}$ (see \cite{LeoniBook} for a proof in the case $\Omega = \R^N$; the following argument proves the result for extension domains $\Omega$). Thus, it suffices to take $r=0$ and prove that the norm $\|\cdot\|_{H^{0,s}(\Omega_T)}$ is equivalent to $\|\cdot\|_{H^{0,s}(\Omega_T),I}.$

By Corollary \ref{cor:fracExtension}, there is an extension operator $\mathcal{T}$, defined via reflection and truncation in the variable $t$ (for each fixed point $x\in \Omega$), such that $\mathcal{T}:H^{0,0}(\Omega_{T})\to H^{0,0}(\Omega\times\R)$ and $\mathcal{T}:H^{0,1}(\Omega_T)\to H^{0,1}(\Omega\times\R)$ are linear and bounded. By interpolation (see Theorem 5.1 of Chapter 1 in \cite{LM-v1}), it follows that $\mathcal{T}:H^{0,s}(\Omega_T)\to H^{0,s}(\Omega\times\R)$ is linear and bounded in the topology of the interpolation norm (\ref{def:InterpNorm}). By a direct computation in the spirit of Corollary \ref{cor:fracExtension}, we have that $\mathcal{T}$ is also continuous in the topology defined by the norm (\ref{def:besovAnisoNorm}). Consequently, using the equivalence of the Gagliardo norm and Fourier norm on $\R$ (see \cite{LeoniBook}) and Fubini's theorem, we have 
\begin{equation}\label{bdd:normEquiv}
\begin{aligned}
\|u\|_{H^{0,s}(\Omega_T)}\leq&  C \|\mathcal{T}u\|_{H^{0,s}(\Omega \times \R)}  \\
\leq & C \|\mathcal{T}u\|_{H^{s}(\R;L^2(\Omega)),\rm{F}}  \leq   C \|\mathcal{T}u\|_{H^{s}(\R;L^2(\Omega)),I} \leq C \|u\|_{H^{s}(0,T;L^2(\Omega)),I}
\end{aligned}
\end{equation}
(see subsection 7.1 of Chapter 1 in \cite{LM-v1} for equivalence of Fourier and interpolation norm).

To obtain the reverse inequality to prove equivalence of the norms, we may essentially reverse the sequence of inequalities in (\ref{bdd:normEquiv}). In the same way that $\mathcal{T}$ was shown to be continuous, we have that the restriction operator $\pi:u \mapsto u|_{\Omega_T}$ mapping from $H^{0,s}(\Omega\times\R)$ to $H^{0,s}(\Omega_T)$ is continuous in the interpolation norm. Consequently,
\begin{equation}\nonumber
\begin{aligned}
\|u\|_{H^{s}(0,T;L^2(\Omega)),I} = &  \|\pi ( \mathcal{T}u)\|_{H^{s}(0,T;L^2(\Omega)),I} \\
\leq & C \| \mathcal{T}u\|_{H^{s}(\R;L^2(\Omega)),I} \leq  C \|\mathcal{T}u\|_{H^{s}(\R;L^2(\Omega)),\rm{F}} \leq C \|\mathcal{T}u\|_{H^{0,s}(\Omega \times \R)} \leq C  \|u\|_{H^{0,s}(\Omega_T)},
\end{aligned}
\end{equation}
where we have, once again, used the equivalence of the Gagliardo and Fourier norms.
\end{proof}
We will make use of some interpolation theorems, which provide regularity of certain quantities. The following result is Proposition 2.1 of Chapter 4 in \cite{LM-v2}.
\begin{prop} \label{prop:anisoInterp}
Let $\Omega$ be an open, bounded set with smooth boundary. For $r,s \geq 0$ and $\theta \in (0,1),$ we have 
\begin{equation}\nonumber
[H^{r,s}(\Omega_T),H^{0,0}(\Omega_T)]_{\theta} = H^{(1-\theta)r,(1-\theta)s}(\Omega_T).
\end{equation}
The same is true on domains $\Sigma_T.$
\end{prop}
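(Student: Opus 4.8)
The plan is to reduce, using the extension operator from the proof of the preceding lemma, to the full space-time cylinder $\Omega\times\R$, on which the time variable is diagonalized by the Fourier transform and the interpolation becomes an explicit operator computation. \emph{Step 1 (reduction to $\Omega\times\R$).} The pair $(\mathcal{T},\pi)$ from the proof of the lemma preceding this proposition --- $\mathcal{T}$ the reflection and truncation that extends in the $t$-variable alone (fibrewise over $\Omega$), $\pi$ the restriction back to $\Omega_T$ --- satisfies $\pi\circ\mathcal{T}=\mathrm{id}$; and since $\mathcal{T}$ touches only $t$, Corollary~\ref{cor:fracExtension} (applied with values in $L^2(\Omega)$) together with the trivial bound on $L^2(0,T;H^\rho(\Omega))$ shows $\mathcal{T}\colon H^{\rho,\sigma}(\Omega_T)\to H^{\rho,\sigma}(\Omega\times\R)$ and $\pi$ the other way to be bounded for every $\rho\ge 0$ and $\sigma\in[0,1]$. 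Hence $(\mathcal{T},\pi)$ is a morphism of the Banach couple $\{H^{r,s}(\Omega_T),H^{0,0}(\Omega_T)\}$ into $\{H^{r,s}(\Omega\times\R),H^{0,0}(\Omega\times\R)\}$, and simultaneously a bounded retraction pair between the two copies of $H^{(1-\theta)r,(1-\theta)s}$. Because $\pi\mathcal{T}=\mathrm{id}$, the retraction principle for interpolation functors then gives, with equivalent norms, that $a\in[H^{r,s}(\Omega_T),H^{0,0}(\Omega_T)]_\theta$ if and only if $\mathcal{T}a\in[H^{r,s}(\Omega\times\R),H^{0,0}(\Omega\times\R)]_\theta$, and that $a\in H^{(1-\theta)r,(1-\theta)s}(\Omega_T)$ if and only if $\mathcal{T}a\in H^{(1-\theta)r,(1-\theta)s}(\Omega\times\R)$. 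So it suffices to prove the identity on $\Omega\times\R$.

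\emph{Step 2 (Fourier transform in $t$).} Applying Remark~\ref{rmk:interp} to the couple $\{H^r(\Omega),L^2(\Omega)\}$ yields a positive self-adjoint operator $A\ge I$ on $L^2(\Omega)$ with $\|v\|_{H^r(\Omega)}\approx\|Av\|_{L^2(\Omega)}$, and, since $[H^r(\Omega),L^2(\Omega)]_\theta=H^{(1-\theta)r}(\Omega)$ by the reiteration theorem, also $\|v\|_{H^{(1-\theta)r}(\Omega)}\approx\|A^{1-\theta}v\|_{L^2(\Omega)}$. Writing $u\mapsto\hat u$ for the Fourier transform in $t$, Plancherel for vector-valued functions together with the Fourier description of $H^\sigma(\R;L^2(\Omega))$ identifies, up to equivalent norms: $H^{r,s}(\Omega\times\R)$ with the set of $u$ for which $\int_\R\left(\left(A^2+(1+|\xi|^2)^s I\right)\hat u(\xi),\hat u(\xi)\right)_{L^2(\Omega)}d\xi<\infty$; the space $H^{0,0}(\Omega\times\R)$ with $L^2(\R;L^2(\Omega))$; and $H^{(1-\theta)r,(1-\theta)s}(\Omega\times\R)$ with the set where the analogous integral, now with weight $A^{2(1-\theta)}+(1+|\xi|^2)^{(1-\theta)s}I$, is finite. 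Thus, on the Fourier side, $\{H^{r,s}(\Omega\times\R),H^{0,0}(\Omega\times\R)\}$ is a couple of $L^2(\R;L^2(\Omega))$ spaces with operator weights $W(\xi):=A^2+(1+|\xi|^2)^s I$ and $2I$; the operator of Remark~\ref{rmk:interp} attached to this couple may be taken to be $\Lambda$, the Fourier multiplier $\xi\mapsto W(\xi)^{1/2}$, and then $\Lambda^{1-\theta}$ acts fibrewise as $\xi\mapsto W(\xi)^{(1-\theta)/2}$, so $[H^{r,s}(\Omega\times\R),H^{0,0}(\Omega\times\R)]_\theta$ is, up to equivalent norms, the set of $u$ with $\int_\R\left(W(\xi)^{1-\theta}\hat u(\xi),\hat u(\xi)\right)_{L^2(\Omega)}d\xi<\infty$.

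\emph{Step 3 (conclusion).} Comparing these descriptions, the proof finishes once one verifies $W(\xi)^{1-\theta}\approx A^{2(1-\theta)}+(1+|\xi|^2)^{(1-\theta)s}I$ as positive operators, uniformly in $\xi$. Since $A^2$ and $(1+|\xi|^2)^s I$ commute and $A\ge I$, the spectral theorem reduces this to the scalar estimate $\tfrac12(a^p+b^p)\le(a+b)^p\le a^p+b^p$, valid for $a,b\ge 1$ and $p=1-\theta\in(0,1)$, with $a$ a spectral value of $A^2$ and $b=(1+|\xi|^2)^s$. This gives $[H^{r,s}(\Omega\times\R),H^{0,0}(\Omega\times\R)]_\theta=H^{(1-\theta)r,(1-\theta)s}(\Omega\times\R)$, and Step 1 carries the identity back to $\Omega_T$. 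The case of $\Sigma_T$ is handled by the identical argument with the compact manifold $\Gamma$ in place of $\Omega$ (one may take $A=(I-\Delta_\Gamma)^{r/2}$, $\Delta_\Gamma$ the Laplace-Beltrami operator), the $t$-variable being treated verbatim. I expect the one genuinely delicate point to be the claim underlying Step 2, that interpolation "passes through the intersection" $H^{r,s}=L^2_t H^r_x\cap H^s_t L^2_x$: the operator-weighted $L^2$ reformulation above is tailored precisely so that the scalar second endpoint weight $2I$ makes the two Fourier weights simultaneously diagonalizable, reducing the whole matter to the one-variable inequality; Steps 1 and 3 are then routine bookkeeping.
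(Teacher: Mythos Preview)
The paper does not prove this proposition; it simply cites it as Proposition~2.1 of Chapter~4 in Lions--Magenes~\cite{LM-v2}. Your sketch is, in fact, essentially the argument Lions and Magenes give: reduce to the full cylinder $\Omega\times\R$ by a retraction pair in the $t$-variable, diagonalize the time direction by the Fourier transform, realize the spatial regularity through a single positive self-adjoint operator $A$ on $L^2(\Omega)$, and finish with the scalar equivalence $(a+b)^{1-\theta}\approx a^{1-\theta}+b^{1-\theta}$ for $a,b\ge 1$. Steps~2 and~3 are carried out correctly.

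One technical point is worth flagging in Step~1. You invoke Corollary~\ref{cor:fracExtension} for the boundedness of $\mathcal{T}$ on $H^{\rho,\sigma}$ with $\sigma\in[0,1]$, but the proposition is stated for all $s\ge 0$, and the paper actually applies it with $s=5/4$ and $s=3/2$. The even-reflection-plus-cutoff extension of the preceding lemma preserves $H^\sigma(0,T)$ only for $\sigma<3/2$ (the reflected function generically fails to have a weak second derivative across $t=0$), so your retraction pair does not cover the full range as written. The fix is standard---replace the simple reflection by a Seeley-type higher-order reflection $\tilde u(-t)=\sum_{j}\lambda_j u(jt)$ with coefficients chosen to match finitely many derivatives at $t=0$---after which the retraction argument goes through for every $s\ge 0$. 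With that adjustment your proof is complete and matches the cited source.
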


In Theorem \ref{thm:traceSobo}, we quote a special case of Theorem 4.2 in Chapter 4 of \cite{LM-v2}, which shows that Hilbert space valued Sobolev functions can be embedded into a space of continuous functions. Although we refer to this theorem as the ``trace Theorem" because it provides the machinery to define traces for many Sobolev spaces, including anisotropic Sobolev spaces in Theorem \ref{thm:LMnormalReg}, we will often use this theorem to obtain $L^\infty(0,T)$ bounds.

\begin{thm}[Trace Theorem]\label{thm:traceSobo} Let $X$ and $Y$ be separable Hilbert spaces, with $X$ densely embedded into $Y$, and $s>1/2$. Then the embedding $ L^2(0,T;X) \cap H^{s}(0,T; Y) \hookrightarrow C([0,T];[X,Y]_{1/(2s)})$ holds. 
\end{thm}

\begin{thm}\label{thm:LMnormalReg}
Let $\Omega\subset \R^N$ be an open, bounded set with smooth boundary. Let $u\in H^{r,s}(\Omega_T)$ with $r>1/2,$ $s\geq 0.$ If $j$ is an integer such that $0\leq j<r-1/2$, we may define the $j$th normal derivative $\partial_\nu^j u \in H^{\mu_j,\lambda_j}(\Sigma_T)$, where
\begin{equation}\label{def:LMnormalRegParams}
\frac{\mu_j}{r} = \frac{\lambda_j}{s} = \frac{r-j-1/2}{r}.
\end{equation}
Furthermore, the map $u \mapsto (\partial_\nu^j u)_{\{0\leq j<r-1/2\}} $ is surjective, continuous, with continuous right inverse.
\end{thm}
\begin{proof}
This is primarily a restatement of Theorem 2.1 in Chapter 4 of \cite{LM-v2}. Existence of the continuous right inverse follows from Theorem 3.2 of Chapter 1 in \cite{LM-v1}.
\end{proof}

\begin{prop}\label{prop:intermediateReg}
Let $\Omega\subset \R^N$ be an open, bounded set with smooth boundary. Suppose that $u\in H^{k,k/4}(\Omega_T)$, $k\in \N$.  Then, $\nabla^2 u \in H^{k-2,(k-2)/4}(\Omega_T)$, with the map $u\mapsto \nabla^2 u $ continuous in the respective topologies.
\end{prop}
\begin{proof}
Let $\mathcal{T}:H^k(\Omega)\to H^k(\R^N)$ be a linear extension operator (defined via reflection and a partition of unity as in Theorem 13.4 and Remark 13.5 of \cite{LeoniBook}), such that there is $r>0$ with ${\rm{supp}}\,\mathcal{T}(\xi) \subset B(0,r)$ for all $\xi\in H^k(\Omega).$  We extend $u$ as $\tilde{u}(x,t):=\mathcal{T}(u(\cdot,t))(x).$ Since $\mathcal{T}(u(\cdot,t))$ is defined by reflecting $u(\cdot,t)$ near $\Gamma$ and using a partition of unity (see, e.g., Theorem 13.17 in \cite{LeoniBook} or Theorem 5.4.1 \cite{EvansPDE}), the regularity of $\tilde{u}$ in time is preserved, and so $\tilde{u}\in H^{k,k/4}(\R^N \times (0,T))$ with 
\begin{equation}\label{bdd:spatExt}
\|\tilde{u}\|_{H^{k,k/4}(\R^N \times (0,T))}\leq  C\|u\|_{H^{k,k/4}(\Omega_T)}.
\end{equation} Write $x = (x',x_N)\in \R^{N-1}\times \R$. By identifying $\xi \in H^{k,k/4}(\R^N\times (0,T))$ with the function $x_N \mapsto \xi((\cdot,x_N),\cdot),$ as noted in \cite{LM-v2}, we may decompose the anisotropic Sobolev space as 
\begin{equation}\nonumber
\begin{aligned}
 H^{k,k/4} & (\R^N \times (0,T)) = \\
& H^k\left(\R;L^2(\R^{N-1}\times (0,T))\right)\cap L^2\left(\R;H^{k,k/4}(\R^{N-1}\times (0,T))\right).
\end{aligned}
\end{equation}
Consequently, we apply an intermediate derivative theorem (Theorem 2.3 of Chapter 1 in \cite{LM-v1}) and Proposition \ref{prop:anisoInterp} to conclude that $\tilde{u}$ maps continuously to 
\begin{equation}\nonumber
\nabla^2 \tilde{u} \in H^{k-2}(\R;L^2(\R^{N-1}\times (0,T)))\cap L^2(\R;H^{k-2,(k-2)/4}(\R^{N-1}\times (0,T))).
\end{equation}
By continuity of the restriction operator and (\ref{bdd:spatExt}), the proposition is proven.
\end{proof}

\subsection{Estimates and Parabolic Regularity}\label{subsec:parabolic}

We will use the Gagliardo-Nirenberg inequality \cite{nirenbergOnPDE}, which improves upon the more standard Sobolev-Gagliardo-Nirenberg embedding theorem (see, e.g., \cite{LeoniBook}).
\begin{thm}[Gagliardo-Nirenberg inequality]\label{thm:nirenbergIneq}
Suppose that $\Omega\subset \R^N$ is an open, bounded set with Lipschitz boundary. Then the following inequality is satisfied for measurable functions $v:$
\begin{equation}\nonumber
\begin{aligned}
\|\nabla^j v\|_{L^p(\Omega)} \leq & C_1 \|\nabla^m v\|_{L^r(\Omega)}^a\|v\|_{L^q(\Omega)}^{1-a} + C_2\|v\|_{L^q(\Omega)},
\end{aligned}
\end{equation}
with $a \geq 0$ satisfying
\begin{equation}\nonumber
\begin{aligned}
\frac{j}{m} \leq a \leq 1,  \quad &\  \frac{1}{p} = \frac{j}{n} +a\left(\frac{1}{r} - \frac{m}{n}\right) + (1-a)\frac{1}{q}.
\end{aligned}
\end{equation}
\end{thm}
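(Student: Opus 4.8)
The plan is to reduce the assertion to the classical interpolation inequality of Gagliardo and Nirenberg on all of $\R^N$ and then transfer it to the bounded Lipschitz domain $\Omega$ by a Sobolev extension; the additive term $C_2\|v\|_{L^q(\Omega)}$ is not an artifact but the price of losing scaling invariance on a bounded set, and it will arise precisely from the lower-order norms of the extension.

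\textbf{The case $\Omega=\R^N$ (homogeneous form).} By density it suffices to treat $v$ in the Schwartz class and prove $\|\nabla^j v\|_{L^p(\R^N)}\le C\|\nabla^m v\|_{L^r(\R^N)}^a\|v\|_{L^q(\R^N)}^{1-a}$, which I would assemble from four ingredients organized by an induction on $m$. (i) For $j=0$: the endpoint $a=1$ is the homogeneous Sobolev embedding $\|v\|_{L^{p_1}}\le C\|\nabla^m v\|_{L^r}$ with $\tfrac1{p_1}=\tfrac1r-\tfrac mn$, and every $a\in(0,1]$ follows by Hölder interpolation in $L^p$ between $L^{p_1}$ and $L^q$, the exponent constraint matching automatically. (ii) The base derivative case $j=1$, $m=2$, $a=\tfrac12$: for $2\le p<\infty$ integrate by parts,
\[
\int_{\R^N}|\partial_i v|^p\,dx \;=\; -(p-1)\int_{\R^N}|\partial_i v|^{p-2}\,(\partial_i^2 v)\,v\,dx,
\]
and apply Hölder with exponents $\tfrac{p}{p-2},\,r,\,q$, which close exactly when $\tfrac1p=\tfrac1{2r}+\tfrac1{2q}$, giving $\|\partial_i v\|_{L^p}^2\le C\|\partial_i^2 v\|_{L^r}\|v\|_{L^q}$; summing over $i$ yields the estimate, while $1\le p<2$ and $p=\infty$ are handled by an approximation/Taylor-expansion argument. (iii) For general $j<m$ at $a=j/m$: applying (ii) componentwise to $\nabla^{\ell-1}v$ gives the three-term inequality $\|\nabla^\ell v\|_{L^{p_\ell}}^2\lesssim\|\nabla^{\ell+1}v\|_{L^{p_{\ell+1}}}\|\nabla^{\ell-1}v\|_{L^{p_{\ell-1}}}$ along the arithmetic progression $\tfrac1{p_\ell}=\tfrac\ell{mr}+\tfrac{m-\ell}{mq}$, and telescoping this (passing to logarithms and invoking a discrete convexity / maximum-principle argument) produces $\|\nabla^j v\|_{L^{p_j}}\lesssim\|\nabla^m v\|_{L^r}^{j/m}\|v\|_{L^q}^{1-j/m}$. (iv) The endpoint $a=1$ for general $j$ is iterated homogeneous Sobolev embedding, and every intermediate $a\in[j/m,1]$ follows by Hölder interpolation in $L^p$ between the $a=j/m$ and $a=1$ bounds.

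\textbf{Transfer to $\Omega$.} Since $\Omega$ is a bounded Lipschitz domain it admits a Stein extension operator $\mathcal{E}$, bounded $W^{k,s}(\Omega)\to W^{k,s}(\R^N)$ simultaneously for all $k\in\N_0$ and $1\le s\le\infty$ (in particular bounded $L^s(\Omega)\to L^s(\R^N)$), which after multiplication by a fixed smooth cutoff may be taken to have image supported in a fixed bounded set. Applying the $\R^N$ inequality to $\mathcal{E}v$ and restricting to $\Omega$ gives $\|\nabla^j v\|_{L^p(\Omega)}\le C\|v\|_{W^{m,r}(\Omega)}^a\|v\|_{L^q(\Omega)}^{1-a}$; it then remains to absorb the intermediate norms $\|\nabla^k v\|_{L^r(\Omega)}$, $0<k<m$, hidden in $\|v\|_{W^{m,r}(\Omega)}$. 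By the inequality for smaller $m$ (the induction) together with Young's inequality, each such term is bounded by $\eta\|\nabla^m v\|_{L^r(\Omega)}+C_\eta\|v\|_{L^r(\Omega)}$, and $\|v\|_{L^r(\Omega)}$ is in turn controlled by $\eta\|\nabla^m v\|_{L^r(\Omega)}+C_\eta\|v\|_{L^q(\Omega)}$ (using, when $r\le q$, that $\Omega$ is bounded, and otherwise the $j=0$ case). Choosing $\eta$ small gives $\|v\|_{W^{m,r}(\Omega)}\le 2\|\nabla^m v\|_{L^r(\Omega)}+C\|v\|_{L^q(\Omega)}$, and substituting into the multiplicative bound and expanding yields the stated inequality with the additive constant $C_2\|v\|_{L^q(\Omega)}$.

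\textbf{Main obstacle.} The delicate point is the $\R^N$ analysis at and near the endpoints: the integration-by-parts estimate is transparent only for $2\le p<\infty$, and both the low-$p$ and $p=\infty$ cases, and above all the exceptional parameter configurations (where $m-j-n/r$ is a nonnegative integer, forcing $a$ to be kept strictly below $1$), require the more careful arguments of Nirenberg's original work; one must also take care that the induction on $m$ used in the transfer step is set up without circularity, since the absorption of intermediate-order derivatives there itself invokes the inequality for lower orders.
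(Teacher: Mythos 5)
The paper does not supply a proof of this theorem: it is cited directly from Nirenberg's original paper \cite{nirenbergOnPDE}, so there is no in-paper argument to compare yours against. That said, your sketch is a recognizable and essentially correct outline of the standard proof. The structure---reduce to $\R^N$ by a Stein extension, prove the homogeneous multiplicative inequality there by the integration-by-parts trick at the balanced exponent $a=j/m$ together with Sobolev embedding at $a=1$ and H\"older interpolation between them, then return to $\Omega$ and absorb the non-scale-invariant lower-order pieces of $\|v\|_{W^{m,r}(\Omega)}$ into the additive term $C_2\|v\|_{L^q(\Omega)}$---is exactly how this is done, and the exponent bookkeeping in your step (ii) ($\tfrac1p=\tfrac1{2r}+\tfrac1{2q}$ from the three-function H\"older) is right.

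Two places where the sketch is lighter than a publishable proof: first, as you flag yourself, the cases $p\in[1,2)$, $p=\infty$, and especially the exceptional configurations where $m-j-n/r$ is a nonnegative integer (forcing $a<1$) are not handled by the simple integration-by-parts identity and occupy most of Nirenberg's original argument; a careless reader might think step (ii) closes the $\R^N$ case in full generality when it does not. Second, in the transfer step the absorption of $\|\nabla^k v\|_{L^r(\Omega)}$ for $0<k<m$ into $\eta\|\nabla^m v\|_{L^r(\Omega)}+C_\eta\|v\|_{L^q(\Omega)}$ relies on the inequality at lower order and on the $j=0$ case over $\Omega$, so the double induction on $(m,j)$ has to be arranged explicitly to avoid using what you are proving; you note this but do not resolve it. Since the paper treats the result as a black box, neither point affects the paper, but both would need to be filled in if this proof were to stand on its own.
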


We consider existence of solutions of the linear fourth-order parabolic PDE
\begin{equation} \label{pde:hoSPE} % was \label{pde:linearParabolic}
\left\{\begin{aligned}
& \partial_t c + \Delta^2 c = g  & \text{in } \Omega_T,\\
& \partial_\nu c  = \alpha & \text{on } \Sigma_T,\\
& \partial_\nu (\Delta c)  = \beta  & \text{on } \Sigma_T,\\
& c(0)  =c_0 & \text{in } \Omega.
\end{aligned}\right.
\end{equation}
For regularity and existence, we have the subsequent theorem, which is a case of Theorem 5.3 of Chapter 4 in \cite{LM-v2}.  
\begin{thm} \label{thm:SPEwithBC}
Let $\Omega\subset \R^N$ be an open, bounded set with smooth boundary and $k\in [1,3/2)\cup \{2\}$. Suppose $g\in H^{k,k/4}(\Omega_T)$, $c_0 \in H^{k+2}(\Omega)$, $\alpha \in H^{\mu_1,\lambda_1}(\Sigma_T)$, and $\beta \in H^{\mu_3,\lambda_3}(\Sigma_T)$, where $\mu_j$ and $\lambda_j$ are defined in (\ref{def:LMnormalRegParams}) with $r = 4+k$ and $s = 1+k/4$. We further assume the compatibility condition $\partial_\nu c_0 = \alpha(\cdot, 0)$ on $\Gamma$. If $k=2$, we additionally suppose $\partial_\nu (\Delta c_0) = \beta (\cdot, 0)$ on $\Gamma$. Then there is a unique solution of the PDE (\ref{pde:hoSPE}) given by $c\in H^{4+k,1+k/4}(\Omega_T)$ satisfying the bound
\begin{equation}\label{bdd:SPEwithBC} % used to be high
\|c\|_{H^{4+k,1+k/4}(\Omega_T)} \leq C(\Omega,T)\left(\|c_0\|_{H^{k+2}(\Omega)} + \|g\|_{H^{k,k/4}(\Omega_T)}  +\|\alpha\|_{H^{\mu_1,\lambda_1}(\Sigma_T)} +\|\beta\|_{H^{\mu_3,\lambda_3}(\Sigma_T)}\right) .
\end{equation}
\end{thm}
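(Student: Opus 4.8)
The plan is to reduce the inhomogeneous problem (\ref{pde:hoSPE}) to a problem with homogeneous boundary data, to which Theorem \ref{thm:SimpleParaEx} (and its higher-order analogues, obtained by bootstrapping) applies, and then to invoke the classical Lions--Magenes machinery for the anisotropic regularity. First I would use Theorem \ref{thm:LMnormalReg}: since $\mu_1,\lambda_1$ and $\mu_3,\lambda_3$ are precisely the trace exponents (\ref{def:LMnormalRegParams}) for the pair $(r,s)=(4+k,1+k/4)$ and the two normal derivatives $\partial_\nu$ and $\partial_\nu\Delta$ correspond to $j=0$ and $j=2$ (both satisfy $j<r-1/2$ when $k\in\{0,1,2\}$), the joint trace map $u\mapsto(\partial_\nu u,\partial_\nu\Delta u)$ from $H^{4+k,1+k/4}(\Omega_T)$ onto $H^{\mu_1,\lambda_1}(\Sigma_T)\times H^{\mu_3,\lambda_3}(\Sigma_T)$ has a continuous right inverse. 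This produces a lifting $w\in H^{4+k,1+k/4}(\Omega_T)$ with $\partial_\nu w=\alpha$, $\partial_\nu(\Delta w)=\beta$, and $\|w\|_{H^{4+k,1+k/4}(\Omega_T)}\le C(\Omega,T)(\|\alpha\|_{H^{\mu_1,\lambda_1}(\Sigma_T)}+\|\beta\|_{H^{\mu_3,\lambda_3}(\Sigma_T)})$. Setting $b:=c-w$, the function $b$ must solve (\ref{pde:hoSPE}) with data $\tilde g:=g-(\partial_t+\Delta^2)w\in H^{k,k/4}(\Omega_T)$, homogeneous boundary conditions $\partial_\nu b=0$, $\partial_\nu(\Delta b)=0$, and initial datum $\tilde c_0:=c_0-w(0)\in H^{k+2}(\Omega)$; note $\|\tilde g\|_{H^{k,k/4}(\Omega_T)}\le C(\Omega,T)(\|g\|_{H^{k,k/4}(\Omega_T)}+\|\alpha\|_{H^{\mu_1,\lambda_1}}+\|\beta\|_{H^{\mu_3,\lambda_3}})$, and similarly for $\tilde c_0$ using the embedding $H^{4+k,1+k/4}(\Omega_T)\hookrightarrow BUC(0,T;H^{2+k}(\Omega))$.

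Next I would check the compatibility conditions transfer correctly. For $k\in\{0,1\}$ one needs $\partial_\nu\tilde c_0=0$ on $\Gamma$, which holds because $\partial_\nu c_0=\alpha(\cdot,0)=\partial_\nu w(0)$ by the hypothesis and the construction of $w$. For $k=2$ one additionally needs $\partial_\nu(\Delta\tilde c_0)=0$, which follows from the extra hypothesis $\partial_\nu(\Delta c_0)=\beta(\cdot,0)=\partial_\nu(\Delta w(0))$. Then I would produce the solution $b$ and the estimate $\|b\|_{H^{4+k,1+k/4}(\Omega_T)}\le C(\Omega,T)(\|\tilde c_0\|_{H^{k+2}(\Omega)}+\|\tilde g\|_{H^{k,k/4}(\Omega_T)})$. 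For $k=0$ this is exactly Theorem \ref{thm:SimpleParaEx} (with $\Lambda=\mathbb I$). For $k=1,2$ I would run the same minimizing-movements construction as in the proof of Theorem \ref{thm:SimpleParaEx} but differentiate the discrete Euler--Lagrange equations further in the tangential/time variables, testing against successively higher-order difference quotients, and use Lemma \ref{lem:ellipticReg} with $k+2$ in place of $2$ to upgrade the spatial regularity; alternatively, and more cleanly, I would cite Lions--Magenes \cite{LM-v2} directly for the anisotropic parabolic regularity of the bi-Laplacian heat operator with homogeneous Neumann-type boundary conditions, since the appendix explicitly defers the intermediate-data case to that reference (see Remark \ref{rmk:noInterp}). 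Reassembling $c=b+w$ gives the desired solution and, by the triangle inequality together with the lifting bound, the estimate (\ref{bdd:SPEwithBC}).

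Finally, uniqueness follows as in Theorem \ref{thm:SimpleParaEx}, Step 5: the difference of two solutions solves (\ref{pde:hoSPE}) with zero data and homogeneous boundary conditions, and testing against the solution itself (legitimized by Theorem II.5.12 of \cite{fabrieBoyer-navierStokes}) and integrating by parts yields $\partial_t\tfrac12\|c\|_{L^2(\Omega)}^2+\|\Delta c\|_{L^2(\Omega)}^2=0$, whence $c\equiv 0$.

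\textbf{Main obstacle.} The delicate point is the $k=1,2$ regularity for the homogeneous problem: one must show that the discrete scheme's solutions gain $k$ extra derivatives in space and $k/4$ in time \emph{uniformly} in $\tau$, which requires carefully commuting the fourth-order operator past tangential derivatives near the curved boundary and controlling the commutator terms against lower-order norms — this is precisely where the smoothness of $\partial\Omega$ and the Neumann compatibility of the data enter. Since carrying this out in full would essentially reproduce a chapter of \cite{LM-v2}, the honest route (consistent with the appendix's stated scope) is to cite that reference for the intermediate-data estimate and spend the effort only on the reduction, the compatibility bookkeeping, and the lifting bound with its $T$-dependence tracked.
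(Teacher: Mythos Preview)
Your overall architecture---lift the boundary data, subtract, apply the homogeneous theory (Theorem \ref{thm:SimpleParaEx} for $k=0$, Lions--Magenes for $k=1,2$), and reassemble---is exactly the paper's strategy, and your treatment of the compatibility conditions and uniqueness is fine.

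However, you have glossed over the one piece of actual work the paper does. Theorem \ref{thm:LMnormalReg} as stated concerns the \emph{pure} normal derivatives $\partial_\nu^j u$, and its right inverse lets you prescribe $(\partial_\nu^0 w,\partial_\nu^1 w,\partial_\nu^2 w,\partial_\nu^3 w)$. The boundary operator $\partial_\nu\Delta$ is \emph{not} $\partial_\nu^3$ on a curved boundary (and your identification ``$j=0$ and $j=2$'' is off in any case; the relevant orders are $1$ and $3$). So your sentence ``the joint trace map $u\mapsto(\partial_\nu u,\partial_\nu\Delta u)$ \ldots\ has a continuous right inverse'' does not follow from Theorem \ref{thm:LMnormalReg} as written. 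The paper spends essentially the entire proof (in the case $k=0$, $\alpha=0$) deriving the identity
\[
\partial_\nu(\Delta w)=\partial_\nu^3 w+\partial_\nu^2 w-\nabla^2 w:\nabla^2 d-\nabla(\Delta d)\cdot\nabla w \quad\text{on }\Gamma,
\]
via the signed-distance function $d$, and then takes $w$ with $w=\partial_\nu w=\partial_\nu^2 w=0$ and $\partial_\nu^3 w=\beta$ from Theorem \ref{thm:LMnormalReg}, so that all lower-order terms vanish and $\partial_\nu(\Delta w)=\beta$ as desired. That computation is the substance of the proof you are missing. (Your claim is ultimately true---$\{\partial_\nu,\partial_\nu\Delta\}$ is a normal system in the Lions--Magenes sense---but establishing it within the paper's framework requires exactly this reduction to pure normal derivatives.)
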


\begin{remark}
The above theorem holds for any choice of norms on the anisotropic Sobolev spaces, so long as you are willing to change the constant $C(\Omega , T)$. In applications within this paper, it will be important to control exactly how this constant depends on $T,$ so we will often extend our considerations to a domain with $T=1$, and control the dependence on $T$ by other means.
\end{remark}
\begin{remark}\label{rmk:BCcompat}
We remark that the compatibility conditions for the initial and boundary data necessarily arise due to the embedding $H^{4+k,1+k/4}(\Omega_T) \hookrightarrow C([0,T];H^{2+k}(\Omega))$ (see Theorem \ref{thm:traceSobo}), and these conditions allow one to study the PDE (\ref{pde:hoSPE}) by first lifting to homogeneous boundary conditions. Generally, Theorem \ref{thm:SPEwithBC} holds for most values of fractional $k$ given that there is a lifting of the boundary and initial condition of the appropriate order; specifically, there must exist $w \in H^{4+k,1+k/4}(\Omega_T)$ such that 
\begin{equation}\label{eqn:BCrel}
\left\{
\begin{aligned}
&w(\cdot, 0) = c_0 \quad & \text{ on }  \Omega, \\
&\partial_\nu w = \alpha \quad &\text{ on } \Sigma_T,\\
&\partial_\nu (\Delta w) = \beta  \quad & \text{ on } \Sigma_T.
\end{aligned}
\right.
\end{equation}
Understanding whether or not such a $w$ exists amounts to understanding the trace of a function in $H^{4+k,1+k/4}(\Omega_T)$ on the parabolic boundary $(\Omega\times \{0\})\cup \Sigma_T$. This study was undertaken by Grisvard in \cite{grisvard-interpolation}, and is also studied by Lions and Magenes \cite{LM-v2}. We note that, for convenience, our statement of Theorem \ref{thm:SPEwithBC} avoids $k= 1+1/2$ as this is the critical regularity for taking the trace in time of $\partial_\nu (\Delta c)$ on $\Sigma_T$. In this case, a nonlocal compatibility condition between $\beta$ and $c_0$ arises to guarantee the existence of $w$ in (\ref{eqn:BCrel}).

\end{remark}

\section{Fixed Point Argument}\label{sec:regular}
In this section, we use a fixed point theorem to prove there there is a solution in $H^{5,1+1/4}(\Omega_T)$ of a truncated CHR model (below in (\ref{pde:truncCHR})). To briefly highlight why we introduce a modified model, consider a Lipschitz function $F$ and the composition map defined by $v \mapsto F(v)$. This map is linearly bounded from $H^{1}(\Omega)$ to $H^1(\Omega),$ that is,
\begin{equation}\nonumber
\|F(v)\|_{H^1(\Omega)} \leq C_F(\|v\|_{H^1(\Omega)} +1),
\end{equation}
and we could hope the same would hold of composition operators from $H^2(\Omega)$ to $H^2(\Omega).$ But, the analogy immediately breaks. To see this, consider  
\begin{equation}\nonumber
 \Delta F (v) = F''(v)\|\nabla v\|^2 +  F'(v)\Delta v.
\end{equation}
Conveniently, the second term is linearly bounded by the $H^2(\Omega)$ norm of $v$, but the first term is quadratic, and it is impossible to avoid this nonlinearity without modification.

To sidestep some of the complications arising from composition, we introduce a truncated Cahn-Hilliard reaction model.
For $\alpha >0$, we let $\psi_\alpha \in C^\infty(\R)$ be a truncation function such that $\psi_\alpha (x) = x$ for all $x\in (-\alpha,\alpha)$, $\|\psi_\alpha'\|_\infty \leq 2$, and $\psi_\alpha' = 0$ on $(-\alpha-1,\alpha+1)^C.$ We define $\Psi_\alpha(x):=(\psi_\alpha(x_1),\ldots, \psi_\alpha (x_N))$ and the \textit{truncated Laplacian} as
\begin{equation}\label{eqn:DeltafTrunc}
 (\Delta f')_\alpha (c) := f_\alpha'''(c)\|\Psi_\alpha(\nabla c)\|^2  +  f_\alpha''(c)\psi_\alpha(\Delta c),
\end{equation}
where $f_\alpha \in C^5(\Omega),$ $f_\alpha = f$ on the interval $[1/\alpha ,1- 1/\alpha]$ and $f_\alpha '' = 0$ on $[0,1]^C$.
We define the \textit{truncated CHR model}:
\begin{equation}\label{pde:truncCHR}
\left\{\begin{aligned}
& \partial_t c  +\Delta^2 c = (\Delta f')_\alpha (c) & \text{in }\Omega_T, \\
& \partial_\nu c = 0 & \text{on }\Sigma_T, \\
& \partial_\nu (\Delta c) = \mathcal{R}(c,\Delta c) & \text{on }\Sigma_T,\\
& c(0) = c_0 & \text{in } \Omega,
\end{aligned}\right.
\end{equation} 
where 
\begin{equation}\label{ass:Rderivatives}
\mathcal{R}\in C^3(\R^2) \text{ is bi-Lipschitz with } \|\nabla^m \mathcal{R}\|_\infty <\infty \text{ for } m=1,2,3 ,
\end{equation}
 and satisfies 
\begin{equation}\label{def:Rfp}
\begin{aligned}
\mathcal{R} (s,w) &=  -R(s,-w+f'(s)) \quad  \text{ for } \quad  (s,w)\in [1/\alpha,1-1/\alpha]\times [-\alpha,\alpha].
\end{aligned}
\end{equation}
In the following, we will be interested in finding regular solutions of (\ref{pde:truncCHR}) because if $c$, $\nabla c$, and $\nabla^2 c $, in (\ref{pde:truncCHR}), are continuous, for $T$ sufficiently small and $\alpha$ well chosen, we will recover a solution to the CHR model (\ref{pde:CHRnoElastic}). To find such sufficiently regular solutions, we need the following a priori estimates for strong solutions, which will be proven in Section \ref{sec:aprioriEst}.
\begin{thm}\label{thm:CHRcont}
Let $\Omega \subset \R^N$, with $N\geq 2$, be an open, bounded set with smooth boundary. Suppose $c \in H^{4 + k,1+k/4}(\Omega_T)$ for $k =0$ or $k=1/2$ is a solution of the truncated CHR model (\ref{pde:truncCHR}) with $c_0 \in H^{2+k}(\Omega)$ satisfying $\partial_\nu c_0 = 0$. Further, let $\tilde c_0 \in H^{4+k}(\Omega)$ be such that $\partial_\nu \tilde c_0 = 0$. Then there is $T_0 = T_0 (c_0 ,\tilde c_0)$ such that for all $T<T_0$ the following estimate holds with $C>0$ independent of $T$, $c_0$, and $\tilde c_0$:
\begin{equation}\label{bdd:CHRcont}
\|c-\tilde c_0\|_{H^{4+k,1+k/4}(\Omega_T)}+ \| c-\tilde c_0\|_{L^\infty(0,T;H^{2+k}(\Omega))}\leq  C\| c_0-\tilde c_0\|_{H^{2+k}(\Omega)} + \eta(c_0, \tilde c_0,T),
\end{equation}
where $\eta(c_0, \tilde c_0,T)\to 0$ as $T \to 0.$
\end{thm}

\begin{remark}
In general $\tilde c_0$ cannot be replaced by $c_0$ in inequality (\ref{bdd:CHRcont}). Supposing the inequality did hold with this replacement, we would have $c(t)-c_0 \in H^{4+k}(\Omega)$ for $t$-a.e., and as the bound (\ref{bdd:CHRcont}) also holds for $\tilde c_0 = 0$, this would imply $c_0 \in H^{4+k}(\Omega)$, which we do not assume.
\end{remark}

In the subsequent theorem, we use a fixed point argument to prove that there is a solution of the truncated CHR model (\ref{pde:truncCHR}) belonging to $H^{5,1+1/4}(\Omega_T)$. We note that estimate (\ref{bdd:CHRcont}) can be found for $k=1/2$ from $k=0$ assuming only that the solution $c$ of the truncated CHR model belongs to $ H^{4,1}(\Omega_T)$ and performing a short bootstrap argument. As Remark \ref{rmk:bootstrapNecessity} shows, using this bootstrap approach to gain any further regularity is challenging, if not impossible! In the proof of Theorem \ref{thm:CHRreg514}, we instead turn to a fixed point argument to gain the next half-step of regularity. This allows us to start by assuming our solution has the desired regularity and only then prove the necessary estimates.

\begin{thm}\label{thm:CHRreg514}
Suppose $\Omega \subset \R^3$ is an open bounded set with smooth boundary. If $c_0\in H^3(\Omega)$ such that $\partial_\nu c_0 = 0$, then there is $T>0$ such that a solution $c$ of the truncated CHR model (\ref{pde:truncCHR}) exists on the interval $\Omega_T$ satisfying the estimate 
\begin{equation}\nonumber
\|c\|_{H^{5,1+1/4}(\Omega_T)} \leq C(f,c_0, \mathcal{R},\Omega, T).
\end{equation}
\end{thm}

\begin{proof}
We apply Schaefer's fixed point theorem to obtain existence of a solution.
Define the operator $A:v \mapsto c$ by the PDE
\begin{equation}\label{def:fixedptA}
\left\{\begin{aligned}
& \partial_t c  + \Delta^2 c =(\Delta f')_\alpha(v) & \text{in } \Omega_T, \\
& \partial_\nu c = 0 & \text{on } \Sigma_T, \\
& \partial_\nu (\Delta c) = \mathcal{R}(v,\Delta v) & \text{on } \Sigma_T, \\
& c(0) = c_0 & \text{in } \Omega .
\end{aligned}\right.
\end{equation}
We choose the domain of $A$ such that $A$ is both compact and ${\rm{range}}(A) \subset H^{5,1+1/4}(\Omega_T)$. Define the Banach space \begin{equation}\label{def:BfpSpace}
\mathcal{B} : = H^{4,1}(\Omega_T) \cap L^{2}(0,T; W^{4,r}(\Omega)) \cap L^{\infty}(0,T;W^{2,r}(\Omega))
\end{equation}
equipped with the sum of norms, for $r<6$ yet to be determined (see (\ref{eqn:estDelta}) for the choice). We \textbf{claim} 
\begin{equation}\label{eqn:compactB}
H^{5,1+1/4}(\Omega_T) \hookrightarrow \hookrightarrow \mathcal{B},
\end{equation} 
which will follow from recalling a series of embeddings. As $$H^{5,1+1/4}(\Omega_T) \hookrightarrow\hookrightarrow [H^{5,1+1/4}(\Omega_T), H^{0,0}(\Omega_T)]_\theta$$ for $\theta \in (0,1)$ (see, e.g., Exercise 16.26 of \cite{LeoniBook}, or \cite{LM-v2}), it suffices to show that 
\begin{equation}\label{eqn:embedTheta}
[H^{5,1+1/4}(\Omega_T), H^{0,0}(\Omega_T)]_\theta = H^{\theta 5,\theta (1+1/4)}(\Omega_T) \hookrightarrow \mathcal{B}
\end{equation} for some $\theta \in (0,1)$. Note by Proposition \ref{prop:anisoInterp}, $H^{\theta 5,\theta (1+1/4)}(\Omega_T) \hookrightarrow H^{4,1}(\Omega_T)$ for $\theta$ sufficiently close to $1$. Using the embedding $H^s(\Omega) \hookrightarrow L^{p^*_s}(\Omega)$ for $s \in (0,1)$ and $p^*_s : = \frac{6}{6-s2}$ (see Theorem 17.51 of \cite{LeoniBook}), for $\theta$ sufficiently close to $1$, we have $H^{\theta 5}(\Omega) \hookrightarrow W^{4,r}(\Omega)$ for given $r <6 = 2^*$. It immediately follows that $H^{\theta 5,\theta (1+1/4)}(\Omega_T) \hookrightarrow L^{2}(0,T; W^{4,r}(\Omega)).$ To conclude the last embedding necessary to prove (\ref{eqn:embedTheta}) we make use of the trace Theorem \ref{thm:traceSobo}, which shows that $c \in H^{\theta 5,\theta (1+1/4)}(\Omega_T)$ is continuously embedded in the space of ${C}([0,T]; [H^{\theta 5}(\Omega),L^2(\Omega)]_{2/(\theta 5)}).$ As $[H^{\theta 5}(\Omega),L^2(\Omega)]_{2/(\theta 5)} = H^{\theta 5-2}(\Omega)$ by Proposition \ref{prop:anisoInterp} (for $\theta>2/5$), we may once again make use of a Besov embedding theorem \cite{LeoniBook} to conclude for any $r<6 = 2^*,$ there is $\theta $ sufficiently close to $1$ such that $H^{\theta 5,\theta (1+1/4)}(\Omega_T) \hookrightarrow L^{\infty}(0,T;W^{2,r}(\Omega)).$ This concludes the claim.

We now prove that the hypotheses of Schaefer's fixed point theorem \cite{EvansPDE} are satisfied by the operator $A:\mathcal{B} \to \mathcal{B}$ for fixed initial data $c_0$. These hypotheses are characterized as
\begin{itemize}
\item \text{Compactness:} The functional ${A}:\mathcal{B} \to \mathcal{B}$ is compact.
\item Continuity: The functional ${A}:\mathcal{B} \to \mathcal{B}$ is continuous.
\item Boundedness: The set $\{c\in \mathcal{B}:\zeta A[c] = c, \zeta \in (0,1])\}$ is bounded in the norm of $\mathcal{B}$.
\end{itemize}
Supposing the claim, by Schaefer's fixed point theorem, we find there is a $c\in \mathcal{B}$ such that $A [c] = c,$ and by the argument for boundedness below, $c \in H^{5,1+1/4}(\Omega_T)$ thereby finishing the theorem. So, it only remains to verify the hypotheses are satisfied. We remark that satisfying the first two hypotheses is necessary housekeeping, and the proof of boundedness contains the main novelty of our argument.

 \vspace{0.5em}
 
\noindent \textbf{Compactness.} \sloppy We estimate $\mathcal{R}(v,\Delta v)$ and $(\Delta f')_\alpha(v)$ in the norms for $H^{3/2,3/8}(\Sigma_T)$ and $H^{1,1/4}(\Omega_T)$ respectively. We define $\beta_v := \mathcal{R}(v,\Delta v) .$ 
As $\mathcal{R}$ is Lipschitz (see (\ref{ass:Rderivatives})), we can use Proposition \ref{prop:intermediateReg} to conclude 
\begin{equation}\label{bdd:betaC}
\|\beta_v\|_{H^{1,1/2}(\Omega_T)}\leq C(\Omega,T)(\|v\|_{H^{4,1}(\Omega_T)} +1).
\end{equation}
By (\ref{bdd:betaC}) and continuity of the trace map (see Theorem \ref{thm:LMnormalReg}), we have
\begin{equation}\label{est:R514}
\begin{aligned}
\|\beta_v\|_{H^{3/2,3/8}(\Sigma_T)} & \leq C(\Omega, T) \|\beta_v\|_{H^{2,1/2}(\Omega_T)} \\
& \leq C(\mathcal{R},\Omega, T)( \|\nabla^2 \beta_v\|_{H^{0,0}(\Omega_T)} + \|v\|_{H^{4,1}(\Omega_T)}+1).
\end{aligned}
\end{equation}
For simplicity, we show how to control the higher order spatial derivatives of $\beta_v$ by looking at two derivatives in the same direction (mixed derivatives are similar):
\begin{equation}\label{eqn:RsecondOrder}
\begin{aligned}
\partial_i^2 \beta_v = &  (\partial_s^2 \mathcal{R}(v,\Delta v)(\partial_i v)^2 + 2(\partial_s\partial_w \mathcal{R})(v,\Delta v)\partial_i v\partial_i \Delta v +(\partial_s \mathcal{R})(v,\Delta v)\partial_i^2 v  \\
& + (\partial_w^2 \mathcal{R})(v,\Delta v)(\partial_i \Delta v)^2 + (\partial_w \mathcal{R})(v,\Delta v)\partial_i^2 \Delta v. \\
\end{aligned}
\end{equation}
As $\mathcal{R}$ is Lipschitz with bounded derivatives (see (\ref{ass:Rderivatives})), up to a constant $C(\mathcal{R})$, Young's inequality implies that $\partial_i^2 \beta_v$ is controlled in $H^{0,0}(\Omega_T)$ by $\|v\|_{H^{4,1}(\Omega_T)}$, $\|(\partial_i v)^2\|_{H^{0,0}(\Omega_T)}$, and  $\|(\partial_i \Delta v)^2\|_{H^{0,0}(\Omega_T)}$; the norm for $(\partial_i v)^2$ may be controlled using a simplified version of the argument to bound the last term. To control the last term, we apply the Gagliardo-Nirenberg inequality (Theorem \ref{thm:nirenbergIneq}) for $t$-a.e. to find
\begin{equation}\label{est:DeltaNiren}
\|\partial_i \Delta v\|_{L^4(\Omega)} \leq C(\|\nabla^2 \Delta v\|_{L^r(\Omega)}^a \|\Delta v\|_{L^r(\Omega)}^{1-a} + \|\Delta v\|_{L^r(\Omega)}),
\end{equation} 
where  $a= \frac{3}{2}(\frac{1}{12}+\frac{1}{r}).$ Choosing $4\leq r<6$ (equivalently, $\frac{3}{8}<a\leq 1/2$), we have 
\begin{equation}\label{eqn:estDelta}
\begin{aligned}
\|(\partial_i \Delta v)^2\|_{H^{0,0}(\Omega_T)}  = & \left(\int _0^T\|\partial_i \Delta v\|_{L^4(\Omega)}^4 \, dt\right)^{1/2} \\
\leq &C \|\Delta v\|_{L^\infty (0,T; L^r(\Omega))}^{2(1-a)} \left(\int _0^T\|\nabla^2 \Delta v \|_{L^r(\Omega)}^{4a} \, dt \right)^{1/2} + C\|\Delta v\|_{L^\infty (0,T; L^r(\Omega))}^2 \\
\leq & C\|\Delta v\|_{L^\infty (0,T; L^r(\Omega))}^{2(1-a)} \left(\int _0^T\|\nabla^2 \Delta v \|_{L^r(\Omega)}^{2}\, dt \right)^{1/2} + C\|\Delta v\|_{L^\infty (0,T; L^r(\Omega))}^{2(1-a)+1} +C \\
\leq & C\|v\|_{\mathcal{B}}^{2(1-a)+1} +C,
\end{aligned}
\end{equation} 
where we used the inequality $s^2 \leq C(s^{2(1-a)+1}+1)$ and H\"older's inequality since $a\leq 1/2$.
Thus, using the definition of the space $\mathcal{B},$ (\ref{est:R514}), and (\ref{eqn:estDelta}), we have
\begin{equation}
\nonumber 
\|\beta_v\|_{H^{3/2,3/8}(\Sigma_T)} \leq C\|v\|_{\mathcal{B}}^{2(1-a)+1} + C.
\end{equation}
Due to the truncated Laplacian (\ref{eqn:DeltafTrunc}), estimation of the bulk term $(\Delta f')_\alpha(v)$ in $H^{1,1/4}(\Omega_T)$ is straightforward. By Theorem \ref{thm:SPEwithBC} (with $k=1$), we conclude
\begin{equation}\nonumber
\|c\|_{H^{5,1+1/4}(\Omega_T)}\leq C\|v\|_{\mathcal{B}}^{2(1-a)+1} + C, 
\end{equation}
which implies $A:\mathcal{B}\to \mathcal{B}$ is compact by (\ref{eqn:compactB}).

 \vspace{0.5em}

\noindent \textbf{Continuity.} Suppose $v_n \to v$ in $\mathcal{B}$. To show that $A[v_n]\to {A}[v]$, by Theorem \ref{thm:SPEwithBC} (with $k=1$) and the claim preceding (\ref{eqn:embedTheta}), it is sufficient to show that the data converges as follows:
\begin{equation} \nonumber
\begin{aligned}
(\Delta f')_\alpha(v_n) & \to (\Delta f')_\alpha(v) & \text{ in }H^{1,1/4}(\Omega_T), \\
\beta_{v_n}:=\mathcal{R}(v_n,\Delta v_n) & \to \mathcal{R}(v,\Delta v) = \beta_v & \text{ in }H^{2,1/2}(\Omega_T),
\end{aligned}
\end{equation}
where we have used Theorem \ref{thm:LMnormalReg} to reduce our consideration to convergence on $\Omega_T$ versus $\Sigma_T.$ We focus our attention on the second convergence, the first being simpler. 

Up to a subsequence, we may assume $\nabla^m v_n \to \nabla^m v$ a.e. in $\Omega_T$ for $m\in \{0,\ldots, 4\}.$ To see that $\beta_{v_n} \to \beta_v$ in $H^{0,0}(\Omega_T),$ recall $\mathcal{R}$ is Lipschitz (see (\ref{ass:Rderivatives})) to find
\begin{equation}\nonumber
\begin{aligned}
\|\beta_{v_n} - \beta_v\|_{H^{0,0}(\Omega_T)}^2  =& \int_0^T \int_\Omega |\mathcal{R}(v_n,\Delta v_n) - \mathcal{R}(v,\Delta v)|^2 \ dx \ dt \\
\leq & C(\mathcal{R})\int_0^T \int_\Omega \left( |v_n - v|^2 + |\Delta v_n -\Delta v|^2 \right) \ dx \ dt.
\end{aligned}
\end{equation}
Thus by the convergence of $v_n$ in $\mathcal{B}$, we directly have convergence in $H^{0,0}(\Omega_T).$
To prove convergence in $H^{0,1/2}(\Omega_T)$ we argue using (\ref{ass:Rderivatives}) and the Gagliardo type semi-norm (see (\ref{def:besovAnisoNorm})):
\begin{equation}\nonumber
\begin{aligned}
|\beta_{v_n} &  - \beta_v|_{H^{0,1/2}(\Omega_T)}^2  \\
=  & \int_\Omega  \int_0^T \int_0^T \frac{|\mathcal{R}(v_n,\Delta v_n)(t) - \mathcal{R}(v,\Delta v)(s)|^2}{|t-s|^{2}} \ dt \ ds \ dx \\
\leq &  C(\mathcal{R}) \int_\Omega  \int_0^T \int_0^T \frac{|v_n(t) - v(s)|^2}{|t-s|^{2}} + \frac{|\Delta v_n(t) - \Delta v(s)|^2}{|t-s|^{2}}  \ dt \ ds \ dx \\
= & C(\mathcal{R}) \left(|v_n - v|_{H^{0,1/2}(\Omega_T)}^2  +|\Delta v_n - \Delta v|_{H^{0,1/2}(\Omega_T)}^2\right).
\end{aligned}
\end{equation}
As $(v_n,\Delta v_i )\to (v,\Delta v )$ in $[H^{2,1/2}(\Omega_T)]^2$ by Proposition \ref{prop:intermediateReg}, we are done. Convergence of first order derivatives in space is done similarly. 

To show that the second order derivatives converge is more involved. We show convergence for repeated derivatives as in (\ref{eqn:RsecondOrder}), with mixed derivatives being similar. We explicitly show convergence of the term $(\partial_w^2 \mathcal{R})(v_n,\Delta v_n)(\partial_i \Delta v_n)^2$ with the remaining terms being simpler. Decomposing the difference of products, for $t-$a.e. we compute
\begin{equation}\label{est:breakdown1}
\begin{aligned}
 \|(\partial_w^2 \mathcal{R})(v_n,\Delta v_n)(\partial_i \Delta v_n)^2 & -(\partial_w^2 \mathcal{R})(v,\Delta v)(\partial_i \Delta v)^2\|_{L^2(\Omega)}  \\
\leq & \|(\partial_w^2 \mathcal{R})(v_n,\Delta v_n)\left[(\partial_i \Delta v_n)^2 -(\partial_i \Delta v)^2\right]\|_{L^2(\Omega)}   \\
& + \|\left[(\partial_w^2 \mathcal{R})(v_n,\Delta v_n) -(\partial_w^2 \mathcal{R})(v,\Delta v)\right](\partial_i \Delta v)^2\|_{L^2(\Omega)} \\
\leq & C(\mathcal{R}) \|(\partial_i \Delta v_n)^2 -(\partial_i \Delta v)^2\|_{L^2(\Omega)} \\
& + \|\left[(\partial_w^2 \mathcal{R})(v_n,\Delta v_n) -(\partial_w^2 \mathcal{R})(v,\Delta v)\right](\partial_i \Delta v)^2\|_{L^2(\Omega)} ,
\end{aligned}
\end{equation}
where we used that derivatives of $\mathcal{R}$ are bounded from (\ref{ass:Rderivatives}).
Up to another subsequence of $n,$ for $t$-a.e., the second term goes to $0$ by the Lebesgue dominated convergence theorem. Taking another subsequence if necessary, we apply H\"older's inequality and the Sobolev-Gagliardo-Nirenberg embedding theorem to show that the first term also goes to $0$ for $t$-a.e.:
\begin{equation}\label{eqn:splitConverge1}
\begin{aligned}
\|(\partial_i \Delta v_n)^2 -(\partial_i \Delta v)^2\|_{L^2(\Omega)} \leq &\, \|\partial_i \Delta v_n -\partial_i \Delta v\|_{L^4(\Omega)} \|\partial_i \Delta v_n +\partial_i \Delta v\|_{L^4(\Omega)} \\
\leq &\, C_\Omega \|\nabla \Delta (v_n - v)\|_{H^1(\Omega)}\|\nabla \Delta (v_n + v)\|_{H^1(\Omega)} \\
\to & \, 0 \cdot 2\|\nabla \Delta v\|_{H^1(\Omega)}  =0.
\end{aligned}
\end{equation}
We now apply the generalized Lebesgue dominated convergence theorem to prove $(\partial_i \Delta v_n)^2 \to(\partial_i \Delta v)^2$ in $H^{0,0}(\Omega_T)$. Writing
\begin{equation}\label{def:bulkSecond}
\begin{aligned}
\|(\partial_i \Delta v_n)^2 -(\partial_i \Delta v)^2\|_{H^{0,0}(\Omega_T)}^2 = & \int_0^T \|(\partial_i \Delta v_n)^2 -(\partial_i \Delta v)^2\|^2_{L^2(\Omega)} \ dt,
\end{aligned}
\end{equation}
we bound the integrand pointwise for $t$-a.e. using estimate (\ref{est:DeltaNiren}) and that $\|v_n\|_{\mathcal{B}} \to \|v\|_{\mathcal{B}}$ in $\mathcal{B}$:
\begin{equation}\label{eqn:splitConverge2}
\begin{aligned}
\|(\partial_i& \Delta v_n)^2 -(\partial_i \Delta v)^2\|^2_{L^2(\Omega)}  \\
\leq & C\left(\|\partial_i \Delta v_n\|_{L^4(\Omega)}^4 +\|\partial_i \Delta v\|^4_{L^4(\Omega)} \right) \\
\leq &C\left(\| \Delta v_n \|_{W^{2,r}(\Omega)}^{4a} \|\Delta v_n\|_{L^r(\Omega)}^{4(1-a)} + \| \Delta v \|_{W^{2,r}(\Omega)}^{4a} \|\Delta v\|_{L^r(\Omega)}^{4(1-a)}\right) \\
\leq &C\sup_k \left\{\|\Delta v_k\|_{L^\infty(0,T;L^r(\Omega))}^{4(1-a)}\right\}\left(\| \Delta v_n \|_{W^{2,r}(\Omega)}^{2}  + \| \Delta v \|_{W^{2,r}(\Omega)}^{2} +1\right)  \\
\leq &C\sup_k \left\{\|v_k\|_{\mathcal{B}}^{4(1-a)}\right\}\left(\| \Delta v_n \|_{W^{2,r}(\Omega)}^{2}  + \| \Delta v \|_{W^{2,r}(\Omega)}^{2} +1\right) \in L^1(0,T).
\end{aligned}
\end{equation}
Thus using (\ref{eqn:splitConverge1}) and (\ref{eqn:splitConverge2}), we apply the generalized Lebesgue dominated convergence theorem to conclude (\ref{def:bulkSecond}) converges to $0.$ Likewise, we conclude that the left-hand side of (\ref{est:breakdown1}) goes to $0$ in $L^2(0,T)$, from which we conclude the desired convergence of second order terms, and finally continuity of the operator $A.$

 \vspace{0.5em}

\noindent \textbf{Boundedness.} We show that the set 
\begin{equation}\label{def:boundingSet}
\{c \in \mathcal{B}: c = \zeta A[c] \text{ for }\zeta \in (0,1]\}
\end{equation}
is bounded in $H^{5,1+1/4}(\Omega_T)$ and thereby $\mathcal{B}$. We assume $\zeta=1$; the argument is the same for other $\zeta.$ Thus, suppose $c = A[c] \in H^{5,1+1/4}(\Omega_T).$
Making use of the assumptions on $f_\alpha$, it straightforward to show that $(\Delta f')_\alpha (c)$ is bounded in $H^{1,1/4}(\Omega_T)$ in terms of $\|c\|_{H^{4,1}(\Omega_T)}\leq C(c_0,\Omega, T)$, where the constant follows from Theorem \ref{thm:CHRcont}. Now, we control $\beta_c := \mathcal{R}(c,\Delta c)$ in $H^{3/2,3/8}(\Sigma_T).$ Given Proposition \ref{prop:intermediateReg}, the norm of $\beta_c$ in $H^{0,3/8}(\Sigma_T)$ is controlled by $\|c\|_{H^{4,1}(\Omega_T)}\leq C(c_0,\Omega, T)$. We summarize these initial bounds as 
\begin{equation}\label{est:data5141}
\|\beta_c\|_{H^{0,3/8}(\Sigma_T)} + \|(\Delta f')_\alpha (c)\|_{H^{1,1/4}(\Omega_T)} \leq C(c_0,f, \mathcal{R},\Omega, T). \\
\end{equation}
 To bound $\beta_c$ in $H^{3/2,0}(\Sigma_T)$, we look at $\nabla^2 \beta_c$ in $H^{0,0}(\Omega_T).$ We control the repeated derivative $\partial_i^2 \beta_c,$  as in (\ref{eqn:RsecondOrder}); control of the mixed derivatives is analogous.

First we need an estimate which follows from Theorem \ref{thm:CHRcont}. Introducing $\tilde c_0 \in H^{4+1/2}(\Omega)$ with $\partial_\nu \tilde c_0 = 0$, by (\ref{bdd:CHRcont}) and noting the embedding $H^{1/2}(\Omega) \hookrightarrow L^3(\Omega) = L^{2^*_{1/2}}(\Omega)$ \cite{LeoniBook}, for all $T>0$ sufficiently small (depending on $c_0$ and $\tilde c_0$), we have 
\begin{equation}\label{bdd:DeltacLinfty}
\|\nabla^2 (c-\tilde c_0)\|_{L^\infty(0,T;L^3(\Omega))} \leq C\|c_0-\tilde c_0 \|_{H^{2+1/2}(\Omega)} + \eta(c_0,\tilde c_0, T) =: \lambda +\eta ,
\end{equation}
where the constant $C>0$ is independent of $c_0,$ $\tilde c_0$, and $T.$
Looking to ultimately bound terms arising in (\ref{eqn:RsecondOrder}), we square the Gagliardo-Nirenberg inequality (Theorem \ref{thm:nirenbergIneq}) in dimension $N=3$ and use (\ref{bdd:DeltacLinfty}) to find for $t$-a.e.
\begin{equation}\label{bdd:Deltac2}
\begin{aligned}
\|\partial_i (\Delta c)\|_{L^4(\Omega)}^2\ \leq & C\left(\|\partial_i (\Delta (c-\tilde c_0))\|_{L^4(\Omega)}^2\ + \|\partial_i (\Delta \tilde c_0)\|_{L^4(\Omega)}^2\right)\\\
\leq & C(\Omega)\left(\|\Delta (c-\tilde c_0)\|_{H^3(\Omega)}\|\Delta (c-\tilde c_0)\|_{L^3(\Omega)} + \|\partial_i (\Delta \tilde c_0)\|_{L^4(\Omega)}^2 \right)\\
 \leq & C(\Omega)\left((\lambda +\eta)\|\Delta (c-\tilde c_0)\|_{H^3(\Omega)} + \|\partial_i (\Delta \tilde c_0)\|_{L^4(\Omega)}^2\right) \\
 \leq &C(\Omega)\left( (\lambda + \eta)\|\Delta c\|_{H^3(\Omega)} + (\lambda + \eta)\|\Delta \tilde c_0\|_{H^3(\Omega)} + \|\partial_i (\Delta \tilde c_0)\|_{L^4(\Omega)}^2\right).
\end{aligned}
\end{equation}
Using H\"older's inequality, Young's inequality, the Sobolev-Gagliardo-Nirenberg embedding theorem (which shows that $\|\partial_i c\|_{L^4(\Omega)}\leq C\|c\|_{H^2(\Omega)}$), and the trace Theorem \ref{thm:traceSobo}, we have for $t$-a.e. 
\begin{equation}\label{bdd:prodpartialc}
\begin{aligned}
\|\partial_i c\, \partial_i (\Delta c)\|_{L^2(\Omega)} \leq & C(\Omega)\|c\|_{L^\infty(0,T;H^2(\Omega))}^2  + \|\partial_i (\Delta c)\|_{L^4(\Omega)}^2  \\
\leq & C(\Omega,T)\|c\|_{H^{4,1}(\Omega_T)}^2 +\|\partial_i (\Delta c)\|_{L^4(\Omega)}^2.
\end{aligned}
\end{equation}
Recalling that $\mathcal{R}$ has bounded derivatives (see (\ref{ass:Rderivatives})) and noting the terms in (\ref{eqn:RsecondOrder}), we see that the bounds (\ref{bdd:Deltac2}), (\ref{bdd:prodpartialc}), and $\|c\|_{H^{4,1}(\Omega_T)}\leq C$  imply
\begin{equation}\label{bdd:partial2wc}
\begin{aligned}
\|\partial_i^2 \beta_c \|_{H^{0,0}(\Omega_T)} \leq  & C(c_0, f, \mathcal{R},\Omega)\left(C(T) + \left(\lambda+\eta\right)\|\Delta c\|_{H^{3,0}(\Omega_T)}\right).
\end{aligned}
\end{equation}
As noted previously, an argument analogous to the above succeeds in controlling the full derivative $\nabla^2 \beta_c$. Thus by (\ref{bdd:betaC}), (\ref{bdd:partial2wc}), and the trace inequality $\|\cdot\|_{H^{3/2}(\Gamma)}\leq C(\Omega)\|\cdot\|_{H^2(\Omega)}$ \cite{LeoniBook}, we have 
\begin{equation}\label{est:data5142}
\begin{aligned}
&\|\beta_c\|_{H^{3/2,0}(\Sigma_T)} \leq C(c_0, f, \mathcal{R},\Omega)\left(C(T) + \left(\lambda+\eta\right)\|\Delta c\|_{H^{3,0}(\Omega_T)}\right).
\end{aligned}
\end{equation}
Lastly, we will extend the bulk and boundary data to $\Omega_1$ and $\Sigma_1$, and then apply Theorem \ref{thm:SPEwithBC} (with $k=1$) to bound $c\in H^{5,1+1/4}(\Omega_T)$ by 
\begin{equation}\label{est:c514penult}
\|c\|_{H^{5,1+1/4}(\Omega_T)} \leq C(c_0,f, \mathcal{R},\Omega, T) + C(c_0,f, \mathcal{R},\Omega)\left( \lambda +\eta \right)\|\Delta c\|_{H^{3,0}(\Omega_T)}.
\end{equation}
Supposing we have estimate (\ref{est:c514penult}), we may conclude the theorem as follows. First, we show we can choose $\tilde c_0 \in H^5(\Omega)$ with $\partial_\nu \tilde c_0 = 0$ such that $\lambda:= C\|c_0-\tilde c_0 \|_{H^{2+1/2}(\Omega)}$ is arbitrarily small. By the divergence theorem, $\phi_0 := \Delta c_0$ satisfies $\int_\Omega \phi_0 \, dx = 0.$ By density of smooth functions in $H^1(\Omega)$, we can choose $\phi_n \in C^\infty(\bar \Omega)$, with $\int_\Omega \phi_n\, dx = 0$, converging to $\phi_0$ in $H^1(\Omega).$ Define $c_n\in C^\infty(\bar \Omega)$ as the solution of the PDE
\begin{equation}
\left\{\begin{aligned}
& \Delta c_n  = \phi_n & \text{in } \Omega, \\
& \partial_\nu c_n = 0 & \text{on } \Gamma.
\end{aligned}\right.
\end{equation}
By elliptic regularity (see, e.g., \cite{EvansPDE}), $c_n \to c$ in $H^3(\Omega)$, and choosing $\tilde c_0 : =c_n$ for sufficiently large $n$, $\lambda$ is as small as desired. Let $\tilde c_0$ be chosen so that $\lambda   < \frac{1}{2C(c_0,f, \mathcal{R},\Omega)}$, where the constant in the denominator refers to the second constant in (\ref{est:c514penult}). Then by Theorem \ref{thm:CHRcont}, we can choose $T>0$ sufficiently small such that $\eta < \frac{1}{2C(c_0,f, \mathcal{R},\Omega)}.$
Finally, applying estimate (\ref{est:c514penult}), we may directly conclude the proof of boundedness. It only remains to prove (\ref{est:c514penult}).

We use Corollary \ref{cor:fracExtension} and (\ref{def:besovAnisoNorm}) to find an extension $\tilde{\beta_c} \in H^{3/2,3/8}(\Omega_1)$ of $\beta_c$ such that
\begin{equation}\nonumber
\|\tilde{\beta_c}\|_{H^{3/2,3/8}(\Sigma_1)} \leq C\left((1+T^{-3/8})\|\beta_c\|_{H^{0,0}(\Sigma_T)} + \|\beta_c\|_{H^{3/2,3/8}(\Sigma_T)}\right).
\end{equation}
Likewise, we find an extension $\tilde{f} \in H^{1,1/4}(\Omega_1)$ of $(\Delta f')_\alpha (c)$ such that
\begin{equation}\nonumber
\|\tilde{f}\|_{H^{1,1/4}(\Omega_1)} \leq C\left((1+T^{-1/4})\|(\Delta f')_\alpha (c)\|_{H^{0,0}(\Omega_T)} + \|(\Delta f')_\alpha (c)\|_{H^{1,1/4}(\Omega_T)}\right).
\end{equation}
With this in hand, we consider the PDE for $\bar{c}$:
\begin{equation}\nonumber
\left\{\begin{aligned}
& \partial_t \bar{c} + \Delta^2 \bar{c} = \tilde{f}  & \text{in } \Omega_1,\\
& \partial_\nu \bar{c} = 0 & \text{on } \Sigma_1,\\
& \partial_\nu (\Delta \bar{c}) =\tilde{\beta_c} & \text{on } \Sigma_1,\\ 
& \bar{c}(0) = c_0 & \text{in } \Omega. 
\end{aligned}\right.
\end{equation}
Note, by uniqueness (see Theorem \ref{thm:SPEwithBC}), $\bar{c}|_{\Omega_T} = c.$ 
Theorem \ref{thm:SPEwithBC} (with $k=1$), bound (\ref{est:data5141}), and (\ref{est:data5142}) then show
\begin{equation}\nonumber
\begin{aligned}
\|c\|_{H^{5,1+1/4}(\Omega_T)}\leq  \|\bar{c}\|_{H^{5,1+1/4}(\Omega_1)} \leq & C(\Omega,1)(\|\tilde{f}\|_{H^{1,1/4}(\Omega_1)} +\|\tilde{\beta_c}\|_{H^{3/2,3/8}(\Sigma_1)} ) \\
\leq & C(c_0,f, \mathcal{R},\Omega, T) + C(c_0,f, \mathcal{R},\Omega)\left( \lambda +\eta \right)\|\Delta c\|_{H^{3,0}(\Omega_T)}.
\end{aligned}
\end{equation}
Note the first constant may blow up as $T \to 0,$ and the extensions in time have been used to guarantee this does not happen to the coefficient of $\lambda +\eta.$

\end{proof}

\begin{remark}\label{rmk:bootstrapNecessity}
We discuss why application of a fixed point theorem appears necessary in the previous proof. To prove Theorem \ref{thm:CHRreg514} by directly bootstrapping, we would need to show that for a solution $c \in H^{4+1/2,1+1/8}(\Omega_T)$, the boundary data defined by $\mathcal{R}(c,\Delta c)$ belongs to $H^{3/2,3/8}(\Sigma_T)$. This would provide the necessary regularity to use Theorem \ref{thm:SPEwithBC} and conclude $c \in H^{5,1+1/4}(\Omega_T)$. Setting our sights slightly lower, we could aim to show $\mathcal{R}(c,\Delta c)\in H^{1+r,1/4+r/4}(\Sigma_T)$ for some $0<r\leq 1/2$. 

Using a covering and flattening argument, this inclusion can be reduced to the following question (for smooth $\mathcal{R}$): For $\Omega \subset \R^2$ and $S$ smooth with bounded derivatives, does $v \in H^{2,1/2}(\Omega_T)$ imply $S(v) \in H^{1+r,1/4+r/4}(\Omega_T)$? In this implication, $\Omega$ plays the role of the boundary and $v$ replaces $\Delta c$ in the original problem. We non-rigorously use $\partial_x^r$ to represent a spatial derivative of (fractional) order $r$. It is direct to conclude $S(v) \in H^{1,1/2}(\Omega_T)$ as $S$ is Lipschitz, and the difficult step is to prove $S'(v)\nabla v \in H^{r,0}(\Omega_T)$. Formally using the fractional Leibniz rule in $\Omega$ \cite{grafakos-KatoPonceIneq} gives that 
\begin{equation}\label{bdd:fracLeibniz}
\|\partial_x^r (S'(v)\nabla v)\|_{L^2(\Omega)} \leq C\left(\|S'(v)\|_{L^\infty(\Omega)} \|\partial_x^r\nabla v\|_{L^2(\Omega)} + \|\partial_x^r S'(v)\|_{L^p(\Omega)} \|\nabla v\|_{L^q (\Omega)}\right),
\end{equation}
where $\frac{1}{p} + \frac{1}{q} = \frac{1}{2}.$ The first term of the right hand side is bounded in $L^2(0,T)$ as $|S'|\leq C<\infty$. To bound the second term, we use the subcritical Sobolev embedding $\|\cdot\|_{L^{p}(\Omega)}\leq C\| \cdot \|_{W^{1-r,\tilde q}(\Omega)}$ with $p = \frac{2\tilde q}{2-(1-r)\tilde q} $ (see \cite{LeoniBook}).  Consequently, the second term is bounded above by
\begin{equation}\nonumber
C(S)\|\partial_x^r v \|_{L^p(\Omega)} \|\nabla v\|_{L^q (\Omega)} \leq C\|\partial_x^r v \|_{W^{1-r,\tilde q}(\Omega)}\|\nabla v\|_{L^q (\Omega)} \leq C\| v \|_{W^{1,\tilde q}(\Omega)}\|v\|_{W^{1,q} (\Omega)}.
\end{equation}
Using the relation between $\tilde q$, $q,$ and $p,$ we heuristically balance our energy by letting $\tilde q = q$ with $q = \frac{4}{2-r}$, which tends to $2$ from above as $r$ goes to $0.$ Combining the above equations, we have 
\begin{equation}\label{eqn:rmkbdd1}
\|S(v)\|_{H^{1+r,0}(\Omega_T)}^2 \leq C(S,\|v\|_{H^{2,1/2}(\Omega_T)}) +C\int_0^T  \|v\|_{W^{1,q} (\Omega)}^4 \, dt.
\end{equation}
As $v \in H^{2,1/2}(\Omega_T)$, $\nabla v \in H^{1,1/4}(\Omega_T)$ by Proposition \ref{prop:intermediateReg}. Noting the embedding of $$H^{1/4}(0,T;L^2(\Omega)) \hookrightarrow L^4(0,T;L^2(\Omega))$$ (which follows from the embedding $H^{1/4}(0,T) \hookrightarrow L^4(0,T)$ \cite{LeoniBook} and norm (\ref{def:besovAnisoNorm})), the best we can naively expect is 
\begin{equation}\label{eqn:rmkbdd2}
\int_0^T  \|v\|_{H^1(\Omega)}^4 <\infty.
\end{equation}
As $q>2$ for any $r>0,$ this falls short of the means to bound (\ref{eqn:rmkbdd1}) using (\ref{eqn:rmkbdd2}). 
\end{remark}

\begin{remark}
We make some comments on the fixed point approach used in the above proof. First, our choice of $\mathcal{B}$ in (\ref{def:BfpSpace}) explicitly highlights spatial regularity in terms of integer-ordered Sobolev spaces, but is essentially the same as choosing $\mathcal{B}:=H^{\theta 5, \theta (1+1/4)}(\Omega_T)$ with $0<\theta <1$ sufficiently close to $1$. For sake of discussion, supposing we have chosen $$\mathcal{B} := H^{\theta 5, \theta (1+1/4)}(\Omega_T) = H^{4+r,1+r/4}(\Omega_T),$$ for some $1/2<r<1,$ it is worth asking if we have lost any power in the estimates of \emph{Boundedness} by bounding the set (\ref{def:boundingSet}) in $H^{5,1+1/4}(\Omega_T)$ versus in $\mathcal{B}$. To see that our approach is, in some sense, equivalent, we present a formal argument to obtain the analogue of (\ref{est:c514penult}) in $\mathcal{B}$. This argument shows that the estimate from Theorem \ref{thm:CHRcont} is critical for our proof regardless of the choice of space $\mathcal{B}$.
\iffalse
In the proof above, we bound the set (\ref{def:boundingSet}) in the space $H^{5,1+1/4}(\Omega_T)$ versus directly controlling this set in the fixed point space (\ref{def:BfpSpace}).  We show that bounding the set in this space offers no benefit, and even here requires the use of Theorem \ref{thm:CHRcont}.
\fi

Restricting our attention to the lifted boundary terms, we estimate the data $\mathcal{R}(c,\Delta c)$ in the correct space given by Theorem \ref{thm:SPEwithBC} for $k = r$ (which holds without modification for this range of $k$, see \cite{LM-v2}). Letting $v$ play the role of $\Delta c$ and $S \in C^\infty(\R)$ be a smooth function with bounded derivatives taking the place of $\mathcal{R}$, the bound needed is  
\begin{equation}\label{bdd:BspaceFP}
\|S(v)\|_{H^{1+r,1/4+r/4}(\Omega_T)} \leq \delta \|v\|_{H^{2+r,1/2+r/4}(\Omega_T)}+C
\end{equation}
for some $0<\delta <1$ and $\Omega\subset \R^3.$ As in Remark \ref{rmk:bootstrapNecessity}, we let $\partial_x^r$ represent a spatial derivative of (fractional) order $r$, and the challenge is to control $S'(v)\nabla v$ in $H^{r,0}(\Omega_T)$ by the right hand side of (\ref{bdd:BspaceFP}). Using the fractional Leibniz rule, (\ref{bdd:fracLeibniz}), we reduce this to bounding $\|\partial_x^r v\|_{L^p(\Omega)}\|\nabla v\|_{L^q(\Omega)}$, with $\frac{1}{p}+\frac{1}{q} = \frac{1}{2}$, in $L^2(0,T).$ Using embeddings for fractional Sobolev spaces \cite{LeoniBook}, we have $$\|\partial_x^r v\|_{L^p(\Omega)} \|\nabla v\|_{L^q(\Omega)} \leq C\|\partial_x^r v\|_{W^{1-r,\tilde q}(\Omega)} \|\nabla v\|_{L^q(\Omega)}\leq C\| v\|_{W^{1,\tilde q}(\Omega)}\|\nabla v\|_{L^q(\Omega)},$$ where $\tilde q$ satisfies $\frac{3\tilde q}{3 - (1-r)\tilde q} = p.$ Dropping lower order terms and balancing the energy by letting $q = \tilde q = \frac{12}{5-2r},$ we need to show 
\begin{equation}\label{bdd:BspaceFP2}
\left( \int_0^T\|\nabla v\|_{L^q(\Omega)}^4 \, dt \right)^{1/2} \leq \delta \|v\|_{H^{2+r,1/2+r/4}(\Omega_T)}+C
\end{equation} (note for $r=1$, we have $q=4$, recovering the left hand side of (\ref{bdd:Deltac2})). We use the Gagliardo-Nirenberg inequality (Theorem \ref{thm:nirenbergIneq}) in dimension $N=3$ to find
\begin{equation}\label{bdd:whyFP}
\|\nabla v\|_{L^q(\Omega)}^2 \leq C\left[\|v\|_{L^3(\Omega)}^{1-a}\|\nabla^2 v\|^a_{W^{2,z}(\Omega)} \right]^2 \leq \left[\|v\|_{L^3(\Omega)}^{1-a}\| v\|^a_{H^{2+r}(\Omega)} \right]^2,
\end{equation}
where $z = \frac{6}{3-2r}$ is the critical exponent for the Sobolev-Gagliardo-Nirenberg embedding theorem in the fractional Sobolev space $H^r(\Omega)$ \cite{LeoniBook}. Note by Theorem \ref{thm:CHRcont}, our best a priori bounds are on $v  \in L^\infty (0,T;H^{1/2}(\Omega)) \hookrightarrow L^\infty (0,T;L^3(\Omega))$, hence why $\|v\|_{L^3(\Omega)}$ is included in (\ref{bdd:whyFP}). With $q$ and $z$ given in terms of $r$, a computation tracking fractions shows $a$ in the Gagliardo-Nirenberg inequality (Theorem \ref{thm:nirenbergIneq}) is given by $a = 1/2$ for every choice of $1/2<r<1$. Consequently, for any $r$ in this range,
\begin{equation}\nonumber
\left(\int_0^T\|\nabla v\|_{L^q(\Omega)}^4\, dt\right)^{1/2} \leq C\|v\|_{L^\infty(0,T;L^3(\Omega))} \| v\|_{H^{2+r,0}(\Omega_T)}.
\end{equation}
It follows that to obtain the estimate (\ref{bdd:BspaceFP2}), we require the smallness of $\|v\|_{L^\infty(0,T;L^3(\Omega))}$ as in (\ref{bdd:CHRcont}) in Theorem \ref{thm:CHRcont}.

\end{remark}

\iffalse
\begin{remark}
Is there a way to use $\int_0^T \|\nabla \mu \|_{L^2(\Omega)}^2$ control from $\partial_t I[c]?$
\end{remark}
\fi

\section{Proof of Theorem \ref{thm:CHRexist3D}}\label{sec:regular2}

We now prove the final step in regularity, inclusion of the solution in $H^{6,1+1/2}(\Omega_T).$ Note, in dimension $N=2$, we are directly able to bootstrap from $H^{5,1+1/4}(\Omega_T)$ to $H^{6,1+1/2}(\Omega_T)$ due to the strength of the Gagliardo-Nirenberg inequality (Theorem \ref{thm:nirenbergIneq}). However, for dimension $N=3$, we must first take an intermediate step and show that our solution almost belongs to $H^{5+1/2,1+3/8}(\Omega_T).$ 

\iffalse
We remark that it may be possible to bootstrap the regularity of the solution in smaller increments; however, the difficulty of this approach would be a necessity of nonlocal estimates. Proceeding in increments of $1/2$ allows us to compute derivatives directly on either $\Omega$ or $\Gamma$. For example, in the previous theorem, we needed to control $H^{3/2}(\Gamma),$ so we reduced this to estimates of terms in $H^2(\Omega)$, where we are able to compute derivatives. Likewise in the next theorem, we will control functions in $H^2(\partial \Omega),$ where we can explicitly compute derivatives on the boundary up to a change of variables.
\begin{thm}
Suppose $\Omega \subset \R^N$, with $N=2$ or $3$, $c_0\in H^4(\Omega)$ such that $\partial_\nu c_0 = 0$ and $\partial_\nu \Delta c_0  = \mathcal{R}(c_0,\Delta c_0)$ on $\Gamma,$ then $c \in H^{6,1+1/2}(\Omega_T).$
\end{thm}
\fi

\begin{proof}[Proof of Theorem \ref{thm:CHRexist3D}]
We complete the proof in three steps. The first two steps involve bootstrapping the solution to higher regularity -- increasing the number of spatial derivatives by approximately $1/2$ in each step. In the final step, we show that the solution is sufficiently regular and recover a solution of the CHR model (\ref{pde:CHRnoElastic}) for short time. 

Let $c\in H^{5,1+1/4}(\Omega_T)$ be a solution of the truncated CHR model from Theorem \ref{thm:CHRreg514}. For Step 1 and Step 2, we will assume that $c_0\in H^4(\Omega)$ satisfies $\partial_\nu c_0 = 0$ and $\partial_\nu (\Delta c_0) = \mathcal{R}(c_0,\Delta c_0)$ on $\Gamma$.

 \vspace{0.5em}

\noindent \textbf{Step 1.} Define 
\begin{equation}\label{def:Bk}
\mathcal{B}_k:=H^{4+k,1+k/4}(\Omega_T).
\end{equation} We show that $c \in \mathcal{B}_k$ for any $k \in (1,1+1/2)$.
Looking to Theorem \ref{thm:SPEwithBC} with $k\in (1,1+1/2)$, and noting control of the bulk term $(\Delta f')_\alpha (c)$ is direct, this step will be concluded if $\mathcal{R}(c,\Delta c) \in H^{1/2+k,(1/2+k)/4}(\Sigma_T)$. Let $Q:=(0,1)^3$ and $Q_0 :=(0,1)^{2}\times \{0\}$. Up to a local flattening and covering argument (which preserves all orders of regularity), this means we must show that $c \in H^{5,1+1/4}(Q \times (0,T))$ implies that 
\begin{equation}\label{eqn:spaceInclusionFin}
\mathcal{R}(c,\Delta c) \in H^{1/2+k,(1/2+k)/4}(Q_0 \times (0,T)).
\end{equation} We show 
\begin{equation}\label{eqn:spaceInclusion}
\mathcal{R}(c,\Delta c) \in H^{2,1/2}(Q_0 \times (0,T)),
\end{equation}
 which implies (\ref{eqn:spaceInclusionFin}) by Proposition \ref{prop:anisoInterp}. As before the challenge to prove (\ref{eqn:spaceInclusion}) is showing that $\nabla_{N-1}^2 \mathcal{R}(c,\Delta c) \in H^{0,0}(Q_0 \times (0,T))$, where $\nabla_{N-1}$ denotes the gradient with respect to the first $N-1 = 2$ variables. We use the Gagliardo-Nirenberg inequality (Theorem \ref{thm:nirenbergIneq}) in dimension $N-1=2$, with $i =1$ and $2$, to find 
\begin{equation}\label{bdd:gagBoundary}
\|\partial_i \Delta c\|_{L^4(Q_0)}^2 \leq \|\Delta c\|_{L^4(Q_0)}\| \Delta c\|_{W^{2,4}(Q_0)}.
\end{equation}
By Proposition \ref{prop:intermediateReg} and Theorem \ref{thm:LMnormalReg}, 
\begin{equation}\label{eqn:inclusion1}
\Delta c \in H^{2+1/2,5/8}(Q_0\times(0,T)),
\end{equation} and by the trace Theorem \ref{thm:traceSobo}, 
\begin{equation}\label{eqn:inclusion2}
\Delta c \in L^\infty (0,T; H^{1/2}(Q_0)).
\end{equation} Noting $H^{m+1/2}(Q_0) \hookrightarrow W^{m,4}(Q_0)$ for $m\in \N_0$ \cite{LeoniBook}, by (\ref{bdd:gagBoundary}), (\ref{eqn:inclusion1}), and (\ref{eqn:inclusion2}) we have 
\begin{equation}\label{bdd:badRterm}
\|(\partial_i \Delta c)^2\|_{H^{0,0}(Q_0\times (0,T))} \leq C\|\Delta c\|_{L^\infty (0,T; H^{1/2}(Q_0))} \|\Delta c\|_{H^{2+1/2,0}(Q_0\times (0,T))} <\infty.
\end{equation}
As in (\ref{eqn:RsecondOrder}), control of $\|\mathcal{R}(c,\Delta c)\|_{H^{2,1/2}(Q_0 \times (0,T))}$ follows from (\ref{bdd:badRterm}) and other bounds on lower order terms.

 \vspace{0.5em}
 
 \noindent \textbf{Step 2.} We prove that $c$ is in $H^{6,1+1/2}(\Omega_T)$.
 Looking to Theorem \ref{thm:SPEwithBC} and leaving the control of the (lower order) bulk term $(\Delta f')_\alpha (c)$ to further details, this step will be concluded if 
\begin{equation}\label{eqn:spaceInclusion2}
\mathcal{R}(c,\Delta c) \in H^{2+1/2,5/8}(\Sigma_T)
\end{equation} and the compatibility condition 
\begin{equation}\label{eqn:compat2}
\partial_\nu (\Delta c_0) (x) = \mathcal{R}(c,\Delta c)(x,0) \text{ for  } x \text{ a.e. in }  \Gamma
\end{equation} holds. We will show that $c\in \mathcal{B}_k$, as defined in (\ref{def:Bk}) for some $1<k<1+1/2$, implies $\mathcal{R}(c,\Delta c)\in H^{2+1/2,5/8}(\Sigma_T)$. By Proposition \ref{prop:intermediateReg} and as $\mathcal{R}$ is Lipschitz (see (\ref{ass:Rderivatives})), the necessary temporal regularity holds for any such $k$. Using continuity of the trace operator $ H^3(\Omega)\hookrightarrow H^{2+1/2}(\Gamma)$, spatial regularity will be complete if we prove the inclusion $\nabla ^3 \mathcal{R}(c,\Delta c) \in H^{0,0}(\Omega_T)$. Computing the derivative from (\ref{eqn:RsecondOrder}), and using Young's inequality, the nonlinear terms which need estimated in $H^{0,0}(\Omega_T)$ are $(\partial_i \Delta c)^3$ and $(\partial_i^2 \Delta c)^{3/2}$. First note that as $c \in \mathcal{B}_k,$ $\nabla^m c \in H^{0,k/4}(0,T;L^2(\Omega))$ for $m = 0,\ldots, 4.$ Using norm (\ref{def:besovAnisoNorm}), this implies that the norm $\|c\|_{H^4(\Omega)}$ has regularity given by $\|c\|_{H^4(\Omega)}\in H^{k/4}(0,T)\hookrightarrow L^z(0,T)$, with $z := \frac{4}{2-k}$ \cite{LeoniBook}.
Then by the Sobolev-Gagliardo-Nirenberg embedding theorem, we have
\begin{equation}\label{bdd:h6derivat}
\begin{aligned}
\int_0^T \|(\partial_i \Delta c)^{3}\|_{L^2(\Omega)}^{2}\, dt =& \int_0^T \|\partial_i \Delta c\|_{L^6(\Omega)}^{6}\, dt  \\
\leq & \int_0^T \| \Delta c\|_{H^4(\Omega)}^{6}\, dt < \infty,
\end{aligned}
\end{equation}
where in the last line, we have used that $z>6$ for $k$ sufficiently close to $1+1/2.$

By the Gagliardo-Nirenberg inequality (Theorem \ref{thm:nirenbergIneq}) with $r = 2$ and $q = 6$, Young's inequality, and (\ref{bdd:h6derivat}), we have
\begin{equation}\nonumber
\begin{aligned}
\int_0^T\|(\partial_i^2 \Delta c)^{3/2}\|_{L^2(\Omega)}^2\, dt =& \int_0^T \|\partial_i^2 \Delta c\|_{L^3(\Omega)}^3 \, dt  \\
\leq & C(\Omega) \int_\Omega \|c\|_{H^5(\Omega)}^{3/2}\| c\|_{W^{3,6}(\Omega)}^{3/2}\, dt \\
\leq & C(\Omega)\int_0^T\left(\|c\|_{H^{5}(\Omega)}^{2} + \| c\|_{W^{3,q}(\Omega)}^{6}\right) \, dt <\infty.
\end{aligned}
\end{equation}

Finally, to obtain the compatibility condition (\ref{eqn:compat2}), note $c\in C([0,T];H^3(\Omega))$, by Theorem \ref{thm:traceSobo}, implying $c$ and $\Delta c \in C([0,T];H^{1/2}(\Gamma))$ with $c(\cdot , 0) = c_0 $ and $\Delta c(\cdot,0) = \Delta c_0.$ By hypothesis on the initial condition, we have 
\begin{equation} \nonumber
 \partial_\nu (\Delta c_0)(x) = \mathcal{R}(c_0,\Delta c_0)(x) =  \mathcal{R}(c,\Delta c)(x,0)\quad \text{ for }x \text{ a.e. in }  \Gamma.
\end{equation}

\noindent \textbf{Step 3: Choosing truncations.}
We choose parameters in our definition of the truncated CHR model (\ref{pde:truncCHR}) so that the regular solution from Step 2 is a solution of the CHR model (\ref{pde:CHRnoElastic}). As $c_0 \in H^{4}(\Omega)$, $\|c_0\|_{C^2(\Omega)}=: \alpha<\infty$ by the Sobolev-Gagliardo-Nirenberg and Morrey embedding theorems. Let $0< \alpha := \|c_0\|_{C^2(\Omega)} +1 <\infty$. Then by (\ref{def:Rfp}), we have $\partial_\nu (\Delta c_0)  = -R(c_0,-\Delta c_0 +f'(c_0)) = \mathcal{R}(c_0,\Delta c_0)$ on $\Gamma$. Applying Steps 1 and 2, there is a solution $c\in H^{6,1+1/2}(\Omega_{T_0})$ of the truncated CHR model (\ref{pde:truncCHR}) for some $T_0>0$.  By the trace Theorem \ref{thm:traceSobo} and the Sobolev-Gagliardo-Nirenberg and Morrey embedding theorems in dimension $N=3$, $c \in C([0,T]; C^{2,a}(\Omega))$ for some $a>0$. By continuity, there is $T>0$ for which $\mathcal{R}(c,\Delta c  ) = -R(c,-\Delta c + f'(c))$ on $\Sigma_T$ and $(\Delta \tilde{f})_{\alpha} (c) = \Delta f(c)$ on $\Omega_T$, proving the theorem.
\end{proof}

\begin{remark}
We note there are initial conditions $c_0$ which satisfy the hypothesis of Theorem \ref{thm:CHRexist3D}. 
Recall, as in Singh et al. \cite{singh2008intercalation}, by (\ref{def:Rbazant}), we have that
\begin{equation}\nonumber % \label{def:Rbazant}
R(c,\mu) = R_{\rm{ins}}-R_{\rm{ext}}= k_{\rm{ins}}\exp(\beta (\mu_e-\mu)) - k_{\rm{ext}}c\exp(\beta(\mu-\mu_e)),
\end{equation}
where all constants $k_{\rm{ext}}, \ k_{\rm{ins}}, \ \beta, \ \mu_e$ are positive. Consider the case of a constant $c_0\in (0,1)$, then we have 
\begin{equation}\nonumber
\nonumber
R(c_0,-\Delta c_0 +f'(c_0)) = R(c_0,f'(c_0)) = k_{\rm{ins}}\exp(\beta (\mu_e-f'(c_0))) - k_{\rm{ext}}c_0\exp(\beta(f'(c_0)-\mu_e)).
\end{equation}
Since $\lim\limits_{z\to 0}f'(z) = -\infty$ and $\lim\limits_{z\to 1}f'(z) = \infty$, it follows that 
\begin{equation}\nonumber
\lim_{c_0 \to 0}R(c_0,f'(c_0)) = \infty, \quad \quad \lim_{c_0 \to 1}R(c_0,f'(c_0)) = -\infty.
\end{equation} By the intermediate value theorem, there is $c_0\in (0,1)$ such that $R(c_0,-\Delta c_0 + f'(c_0)) = 0.$ It then follows that $c_0$ is an admissible condition for Theorem \ref{thm:CHRexist3D} as $\partial_\nu (\Delta c_0) = 0 = -R(c_0,-\Delta c_0 +f'(c_0)).$ Considering interior perturbations of $c_0,$ we may find other admissible initial conditions. 

\end{remark}

\section{A Priori Estimates: Proof of Theorem \ref{thm:CHRcont}}\label{sec:aprioriEst}

\iffalse

\begin{remark}
To extend Theorem \ref{thm:CHRcont} to the CHR model (\ref{pde:CHRnoElastic}) for $k=0$ with $f$ and $R$ having linear growth with $R(s,w)w$ bounded above, one can use the dissipation inequality
\begin{equation}\nonumber
\frac{d}{dt}  \left(\int_\Omega f(c) + \|\nabla c\|^2 \, dx\right) +\int_\Omega \|\nabla \mu \|^2 \ dx \leq  \int_{\partial \Omega} R(c,\mu) \mu \ d \mathcal{H}^2,
\end{equation}
which holds for strong solutions of the CHR model (\ref{pde:CHRnoElastic}), to control $A_1$ in (\ref{bdd:barCterms}) in dimension $N=2$ or $3$. Only minor changes occurs for terms involving $R$. However for $k=1/2$, it is not clear if Theorem \ref{thm:CHRcont} holds as our proof of Theorem \ref{thm:CHRcont} relies on the closure of fractional Sobolev spaces $H^s(0,T)$ under multiplication. To extend our result, one would need precise control on continuity of the map $(g,h)\in H^{s}(0,T)^2 \mapsto gh\in H^s(0,T)$. It is possible this challenge could be overcome with Kato-Ponce type inequalities \cite{grafakos-KatoPonceIneq}, but given the local nature of Theorem \ref{thm:CHRexist3D}, the benefit will be limited. So we adopt the simplest approach and assume $g,h \in L^\infty (0,T)$. In our context, this is provided by considering solutions of the truncated CHR model (\ref{pde:truncCHR}).
\end{remark}

\fi

Two challenges occur which make the proof of Theorem \ref{thm:CHRcont} involved:
\begin{itemize}
\item We know that $\|v\|_{L^\infty(0,T;H^2(\Omega))} \leq C \|v\|_{H^{4,1}(\Omega_T)}$ from Theorem \ref{thm:traceSobo}. Necessarily though, $C$ depends on $T$, blowing up as $T\to 0$ (consider a function constant in time). 

\item Returning to the notation of Subsection \ref{subsec:anisoSobolev}, there is insufficient literature detailing the constants by which $\|\cdot\|_{H^s(0,T),I}$ is equivalent to $\|\cdot \|_{H^{s}(0,T)}$, where the later norm is given by the integral of the derivative or difference quotients. Existing results of which the authors are aware address this relation with the use of extensions (see, e.g., \cite{chandler-interpSobolev}).
\end{itemize}
As we will send $T\to 0,$ i.e., shrink the size of our domain, these constants are critical. To navigate this problem, the first trick in our argument is to extend the functions/data on the right hand side of (\ref{pde:truncCHR}) to the domain $\Omega_1$ using Corollary \ref{cor:fracExtension}. Then we may use regularity theory for parabolic equations (see Theorem \ref{thm:SPEwithBC}) on a domain independent of $T$. Any dependence on $T>0$ in the estimates arises from the extension in Corollary \ref{cor:fracExtension}, which allows for precise control. 

\begin{proof}[Proof of Theorem \ref{thm:CHRcont}]

We prove the result in two steps: first $k=0$, and then $k=1/2$. 

\noindent \textbf{Step 1: $\boldsymbol{k=0}$.}
Let $c$ be a strong solution of the truncated CHR model (\ref{pde:truncCHR}) on $\Omega_T$ for $0<T\leq 1.$
We translate the initial condition to consider a PDE for $b := c- \tilde c_0$
\begin{equation}\label{pde:CHRtrans}
\left\{\begin{aligned}
&\partial_t b +\Delta^2 b  =(\Delta f')_\alpha(c)-\Delta^2\tilde c_0 =:g  & \text{in } \Omega_T,\\
&\partial_\nu b  = 0 & \text{on } \Sigma_T,\\
&\partial_\nu (\Delta b)  = [\mathcal{R}(c,\Delta c )-\mathcal{R}(\tilde c_0,\Delta \tilde c_0)] +[\mathcal{R}(\tilde c_0,\Delta \tilde c_0) - \partial_\nu (\Delta \tilde c_0)] =:h & \text{on } \Sigma_T,\\
& b(0)= c_0 - \tilde c_0 & \text{in } \Omega.
\end{aligned}\right.
\end{equation} 
Note that as $c\in H^{4,1}(\Omega_T)$, $h \in H^{1/2,1/8}(\Sigma_T)$ (see Theorem \ref{thm:LMnormalReg}) and by Corollary \ref{cor:fracExtension} with (\ref{def:besovAnisoNorm}) may be extended to $\tilde{h}\in H^{1/2,1/8}(\Sigma_1)$ satisfying the bound 
\begin{equation}\nonumber
\|\tilde{h}\|_{H^{1/2,1/8}(\Sigma_1)}\leq C\Big((1+T^{-1/8})\|h \|_{H^{0,0}(\Sigma_T)} + \|h\|_{H^{1/2,1/8}(\Sigma_T)}\Big).
\end{equation}
Let $\tilde g$ be the extension of $g \in H^{0,0}(\Omega_T)$ to $H^{0,0}(\Omega_1)$ defined by Corollary \ref{cor:fracExtension}.

We consider the PDE for $\bar b$ on the extended domain $\Omega_1:$
\begin{equation}\label{pde:CHRtransExt}
\left\{\begin{aligned}
& \partial_t \bar b + \Delta^2 \bar b =\tilde g & \text{in } \Omega_1, \\
& \partial_\nu \bar b  = 0  & \text{on }\Sigma_1,\\
& \partial_\nu (\Delta \bar b) = \tilde{h}  & \text{on }\Sigma_1 ,\\
& \bar b(0) = c_0 - \tilde c_0 & \text{in } \Omega .
\end{aligned}\right.
\end{equation} 
If problem (\ref{pde:CHRtransExt}) admits a solution $\bar b \in H^{4,1}(\Omega)$, then as (\ref{pde:CHRtransExt}) coincides with (\ref{pde:CHRtrans}) on $\Omega_T$, by uniqueness (see Theorem \ref{thm:SPEwithBC}), $\bar b|_{\Omega_T} = b.$

We apply Theorem \ref{thm:SPEwithBC} (with $k=0$) to conclude that (\ref{pde:CHRtransExt}) admits a unique solution $\bar b$ satisfying:
\begin{equation}\label{bdd:barCterms}
\begin{aligned}
\|\bar b\|_{H^{4,1}(\Omega_1)} & + \|\bar b\|_{L^\infty(0,1;H^2(\Omega))}\\
\leq & C(\Omega,1)\Big(\| c_0 -\tilde c_0\|_{H^{2}(\Omega)}  + \|\tilde g\|_{H^{0,0}(\Omega_1)}+\|\tilde{h}\|_{H^{1/2,1/8}(\Sigma_1)} \Big) \\
\leq & C\Big(\| c_0 -\tilde c_0\|_{H^{2}(\Omega)} + \|g\|_{H^{0,0}(\Omega_T)} +  (1+T^{-1/8})\|h\|_{H^{0,0}(\Sigma_T)} + \|h\|_{H^{1/2,1/8}(\Sigma_T)}\Big) \\
 =: & C(\| c_0 -\tilde c_0\|_{H^{2}(\Omega)} + A_1 + A_2 +A_3).
\end{aligned}
\end{equation}
We note that $C$ in the above estimate is independent of $T$, and the extension $\Omega_T$ to $\Omega_1$ was specifically done to control dependence of constants on $T$ in the above expression.
We now estimate each term in the above expression.

\noindent \textbf{Term $\boldsymbol{A_1}$:}
As the truncated Laplacian (\ref{eqn:DeltafTrunc}) is bounded in $L^\infty$ and $\tilde c_0$ is constant in time, we have
\begin{equation}\label{bdd:termI}
A_1 = \|g\|_{H^{0,0}(\Omega_T)} \leq  T^{1/2}C(\tilde c_0, \alpha).
\end{equation}

\noindent \textbf{Term $\boldsymbol{A_2}$:}
We note that $H^{3}(\Omega) = [H^{4}(\Omega),H^{2}(\Omega)]_{1/2},$ so $\|\cdot \|_{H^3(\Omega)} \leq C(\Omega) \|\cdot \|_{H^2(\Omega)}^{1/2} \|\cdot \|_{H^4(\Omega)}^{1/2}$ for some constant $C(\Omega)>0$ (see \cite{LM-v1}). It follows by H\"older's inequality that 
\begin{equation}\label{bdd:addendum}
\begin{aligned}
\|c-\tilde c_0\|_{H^{3,0}(\Omega_T)}^2 \leq & C(\Omega) \int_0^T  \|c(t)-\tilde c_0\|_{H^2(\Omega)} \|c(t) -\tilde c_0\|_{H^4(\Omega)} \ dt \\
\leq & C(\Omega) \|c-\tilde c_0\|_{H^{2,0}(\Omega_T)}\|c-\tilde c_0\|_{H^{4,0}(\Omega_T)} \\
\leq & C(\Omega) T^{1/2} \|c-\tilde c_0\|_{L^\infty(0,T;H^2(\Omega))}\|c-\tilde c_0\|_{H^{4,0}(\Omega_T)} \\
\leq & C(\Omega) T^{1/2} \Big( \|c-\tilde c_0\|_{L^\infty(0,T;H^2(\Omega))}^2 + \|c-\tilde c_0\|_{H^{4,0}(\Omega_T)}^2\Big).
\end{aligned}
\end{equation}
Using continuity of the trace operator (see \cite{LeoniBook}), $\|h(\cdot,t)\|_{H^{1/2}(\Gamma)}\leq C(\Omega)\|h(\cdot,t)\|_{H^1(\Omega)}$ for $t$-a.e. in $(0,T)$. Hence, 
\begin{equation}\label{bdd:hTrToSpace}
\|h\|_{H^{1/2,0}(\Sigma_T)}\leq \|h\|_{H^{1,0}(\Omega_T)}.
\end{equation}
By the definition of $h$ in (\ref{pde:CHRtrans}), we have
\begin{equation}\label{bdd:termIIa}
\begin{aligned}
\|h\|_{H^{1,0}(\Omega_T)}  \leq & C(\Omega) \left(\|\mathcal{R}(c,\Delta c )-\mathcal{R}(\tilde c_0,\Delta \tilde c_0)\|_{H^{1,0}(\Omega_T)} + \sqrt{T} C(\tilde c_0)  \right).
\end{aligned}
\end{equation}
To estimate the first term of the right hand side, we use that $\mathcal{R}$ is bi-Lipschitz (from (\ref{ass:Rderivatives})) to control the $H^{0,0}$ norm by $\|c-\tilde c_0\|_{H^{2,0}(\Omega_T)}$. Using the chain rule and rearranging, the gradient is given by
\begin{equation}\label{eqn:difference}
\begin{aligned}
& \nabla (\mathcal{R}(c,\Delta c )-\mathcal{R}(\tilde c_0,\Delta \tilde c_0)) = \\
& \partial_s \mathcal{R}(c,\Delta c)\nabla c - \partial_s \mathcal{R}(\tilde c_0,\Delta \tilde c_0)\nabla \tilde c_0 +\partial_w \mathcal{R}(c,\Delta c)\nabla (\Delta c)-\partial_w \mathcal{R}(\tilde c_0,\Delta \tilde c_0)\nabla (\Delta \tilde c_0).
\end{aligned}
\end{equation}
From (\ref{eqn:difference}) and (\ref{ass:Rderivatives}), we estimate
\begin{equation}\label{bdd:Rdifference}
\begin{aligned}
\|\partial_w \mathcal{R}(c,\Delta c)\nabla (\Delta c) & - \partial_w \mathcal{R}(\tilde c_0,\Delta \tilde c_0)\nabla (\Delta \tilde c_0)\|_{L^{2}(\Omega)} \\
 \leq & \|\partial_w \mathcal{R}(c,\Delta c)\left( \nabla (\Delta c) - \nabla (\Delta \tilde c_0)\right)\|_{L^{2}(\Omega)} \\
& + \|\left(\partial_w \mathcal{R}(c,\Delta c) - \partial_w \mathcal{R}(\tilde c_0,\Delta \tilde c_0)\right) \nabla (\Delta \tilde c_0) \|_{L^2(\Omega)} \\
\leq & C(\mathcal{R})\left( \|c-\tilde c_0\|_{H^3(\Omega)} + \|\nabla (\Delta \tilde c_0)\|_{L^2(\Omega)}\right) .
\end{aligned}
\end{equation}
By a similar bound for the first difference in (\ref{eqn:difference}), we use (\ref{bdd:addendum}), (\ref{bdd:termIIa}), and (\ref{bdd:Rdifference}) to conclude
\begin{equation}\label{bdd:termIIb}
\begin{aligned}
\|h\|_{H^{1,0}(\Omega_T)} \leq & \, C(\mathcal{R},\Omega) T^{1/4} \Big(\|c-\tilde c_0\|_{L^\infty(0,T;H^2(\Omega))} + \|c - \tilde c_0 \|_{H^{4,0}(\Omega_T)} + T^{1/4} C(\tilde c_0)\Big) .
\end{aligned}
\end{equation}
By the definition of $A_2$ in (\ref{bdd:barCterms}), (\ref{bdd:hTrToSpace}), and (\ref{bdd:termIIb})
\begin{equation}\label{bdd:termII}
\begin{aligned}
A_2  \leq & C(\mathcal{R},\Omega) T^{1/8} \Big(\|c-\tilde c_0\|_{L^\infty(0,T;H^2(\Omega))} + \|c - \tilde c_0 \|_{H^{4,0}(\Omega_T)} + T^{1/4} C(\tilde c_0)\Big) .
\end{aligned}
\end{equation}

\noindent \textbf{Term $\boldsymbol{A_3}$:}
First, we focus our attention on control of the semi-norm $|h|_{H^{0,1/8}(\Sigma_T)}.$
For convenience, we will define $\mathcal{R}_c:= \mathcal{R}(c,\Delta c).$ Setting $(\mathcal{R}_c)_T(x,t):=\mathcal{R}_c(x,Tt)$ and $c_T(x,t):=c(x,Tt)$, using Lemma \ref{lem:fracNormStretch}, Theorem \ref{thm:LMnormalReg}, Proposition \ref{prop:intermediateReg}, (\ref{ass:Rderivatives}), and that $\tilde c_0$ is independent of time, it follows that 
\begin{equation}\nonumber
\begin{aligned}
|h|_{H^{0,1/8}(\Sigma_T)}  = & \, |\mathcal{R}_c |_{H^{0,1/8}(\Sigma_T)}\\
= & \, T^{3/8}|(\mathcal{R}_c)_T|_{H^{0,1/8}(\Sigma_1)} \\
= & \, T^{3/8}|\mathcal{R}_{(c_T)}|_{H^{0,1/8}(\Sigma_1)} \\
\leq & \, C(\mathcal{R}) T^{3/8}\left(|(c-\tilde c_0)_T|_{H^{0,1/8}(\Sigma_1)}+ |\Delta (c-\tilde c_0)_T|_{H^{0,1/8}(\Sigma_1)}\right) \\
\leq &\, C T^{3/8}\|(c-\tilde c_0)_T\|_{H^{3,3/4}(\Omega_1)} \\
\leq  &\, C T^{3/8}\left(\|(c-\tilde c_0)_T\|_{H^{3,0}(\Omega_1)} +|(c-\tilde c_0)_T|_{H^{0,3/4}(\Omega_1)}\right).
\end{aligned}
\end{equation}
Using a change of variables, we have that $\|(c-\tilde c_0)_T\|_{H^{3,0}(\Omega_1)} =T^{-1/2}\|(c-\tilde c_0)\|_{H^{3,0}(\Omega_T)}.$ By Proposition \ref{prop:BesovEst} and a change of variables, we have
\begin{equation}\nonumber
\begin{aligned}
|(c-\tilde c_0)_T|_{H^{0,3/4}(\Omega_1)}\leq & \, C\|\partial_t ((c-\tilde c_0)_T)\|_{H^{0,0}(\Omega_1)}   \\
= &  \, C T \|(\partial_t (c-\tilde c_0))_T\|_{H^{0,0}(\Omega_1)} = CT^{1/2}\|\partial_t (c-\tilde c_0)\|_{H^{0,0}(\Omega_T)}.
\end{aligned}
\end{equation}
Consolidating these estimates along with (\ref{bdd:addendum}), we find
\begin{equation}\label{bdd:termIIIa}
|h|_{H^{0,1/8}(\Sigma_T)} \leq  C(\mathcal{R} ,\Omega) T^{1/8} \left( \|c-\tilde c_0\|_{H^{4,1}(\Omega_T)} +\|c-\tilde c_0\|_{L^{\infty}(0,T;H^2(\Omega))} \right).
\end{equation}
By (\ref{bdd:hTrToSpace}), (\ref{bdd:termIIb}), and (\ref{bdd:termIIIa}), we have 
\begin{equation}\label{bdd:termIII}
A_3 \leq  C(\mathcal{R} ,\Omega) T^{1/8} \left( \|c-\tilde c_0\|_{H^{4,1}(\Omega_T)} +\|c-\tilde c_0\|_{L^{\infty}(0,T;H^2(\Omega))}  + T^{1/4} C(\tilde c_0)\right).
\end{equation}

Returning to (\ref{bdd:barCterms}), using Theorem \ref{thm:traceSobo}, recalling $b := c - \tilde c_0,$ and combining the bounds (\ref{bdd:termI}), (\ref{bdd:termII}), and (\ref{bdd:termIII}), we find
\begin{equation}\nonumber
\begin{aligned}
 \| c & -\tilde c_0  \|_{H^{4,1}(\Omega_T)} + \|c-\tilde c_0\|_{L^\infty(0,T;H^2(\Omega))} \\
 \leq & \|\bar b\|_{H^{4,1}(\Omega_1)} + \| \bar b\|_{L^\infty(0,1;H^2(\Omega))} \\
\leq & C\| c_0 -\tilde c_0\|_{H^2(\Omega)}  + C( c_0 ,\tilde c_0 )T^{1/8} \left(\| c-\tilde c_0 \|_{H^{4,1}(\Omega_T)} + \|(c-\tilde c_0)\|_{L^\infty(0,T;H^2(\Omega))}\right) +T^{3/8}C(c_0 ,\tilde c_0) .
\end{aligned}
\end{equation}
With this inequality, the theorem holds for $k=0$ and $T_0 = \frac{1}{2C(c_0,\tilde c_0)^8}$. For use in the next step, note that the above inequality implies
\begin{equation}\label{bdd:CHRcontExt}
\|\bar b\|_{H^{4,1}(\Omega_1)} \leq  C\| c_0-\tilde c_0\|_{H^{2}(\Omega)} + \eta(c_0, \tilde c_0,T).
\end{equation}

\noindent \textbf{Step 2: $\boldsymbol{k=1/2}$.} Note a solution for $k=1/2$ is also a solution for $k=0$, i.e., belonging to $H^{4,1}(\Omega_T)$. Consequently, we consider a solution $\bar b$ of (\ref{pde:CHRtransExt}) that satisfies (\ref{bdd:CHRcontExt}). Furthermore by Corollary \ref{cor:fracExtension} and (\ref{def:besovAnisoNorm}), we have
\begin{equation}\label{bdd:gExt}
\|\tilde g\|_{H^{1/2,1/8}(\Omega_1)} \leq C(1+T^{-1/8}) \|g\|_{H^{0,0}(\Omega_T)} + C\|g\|_{H^{1/2,1/8}(\Omega_T)}
\end{equation} 
and 
\begin{equation}\label{bdd:hExt}
\|\tilde h\|_{H^{1,1/4}(\Sigma_1)} \leq C(1+T^{-1/4}) \|h\|_{H^{0,0}(\Sigma_T)} + C\|h\|_{H^{1,1/4}(\Sigma_T)}.
\end{equation} 

Recalling bound (\ref{bdd:termI}), we have $\|g\|_{H^{0,0}(\Omega_T)} \leq C(\tilde c_0, \alpha)\sqrt{T}.$ To control the fractional norm in space arising in (\ref{bdd:gExt}), we simply control $g \in H^{1,0}(\Omega_T)$. By (\ref{eqn:DeltafTrunc}) and (\ref{pde:CHRtrans}), up to terms constant in time (controlled in energy by $\sqrt{T}$), we must control the derivative of $(\Delta f')_\alpha(c) - (\Delta f')_\alpha(\tilde c_0)$. To see how this is done, we bound one term of the derivative, e.g., $ f_\alpha''(c)\psi_\alpha'(\Delta c)\nabla (\Delta c) - f_\alpha''(\tilde c_0)\psi_\alpha'(\Delta \tilde c_0)\nabla (\Delta \tilde c_0):$
\begin{equation}\label{bdd:falpha}
\begin{aligned}
\|f_\alpha''(c)& \psi_\alpha'(\Delta c)\nabla (\Delta c) - f_\alpha''(\tilde c_0)\psi_\alpha'(\Delta \tilde c_0)\nabla (\Delta \tilde c_0)\|_{L^2(\Omega)} \\
\leq & \|f_\alpha''(c) \psi_\alpha'(\Delta c)\left[ \nabla (\Delta c) - \nabla (\Delta \tilde c_0)\right]\|_{L^2(\Omega)} + \|\left[f_\alpha''(c) \psi_\alpha'(\Delta c)- f_\alpha''(\tilde c_0)\psi_\alpha'(\Delta \tilde c_0)\right] \nabla (\Delta \tilde c_0)\|_{L^2(\Omega)} \\
\leq & C\|\nabla (\Delta c) - \nabla (\Delta \tilde c_0)\|_{L^2(\Omega)} + C(\tilde c_0, f_\alpha, \psi_\alpha, \Omega),
\end{aligned}
\end{equation}
where we used the fact that $f_\alpha \in C^5(\R)$ and $\psi_\alpha \in C^\infty_c(\R).$
Using the bound (\ref{bdd:CHRcontExt}), we can control the right hand side of (\ref{bdd:falpha}) by $C\| c_0-\tilde c_0\|_{H^{2}(\Omega)} + \eta(c_0, \tilde c_0,T)$ when we integrate in time. Splitting the difference of a product via the standard trick and noting $\tilde c_0$ is independent of time, using Theorem \ref{prop:intermediateReg}, it is straightforward to show $|g|_{H^{0,1/8}(\Omega_T)} \leq C\|c- \tilde c_0\|_{H^{5/2,5/8}(\Omega_T)}+C\sqrt{T}$. In total, using (\ref{bdd:CHRcontExt}) again, we have
\begin{equation}\nonumber
\|\tilde g\|_{H^{1/2,1/8}(\Omega_1)} \leq C\| c_0-\tilde c_0\|_{H^{2}(\Omega)} + \eta(c_0, \tilde c_0,T).
\end{equation} 

Control of the first term on the right-hand side of (\ref{bdd:hExt}) follows from (\ref{bdd:termII}) and the definition of $A_2$ in (\ref{bdd:barCterms}), while the high order term $\|h\|_{H^{1,1/4}(\Sigma_T)}$ is controlled using (\ref{bdd:CHRcontExt}) by 
\begin{equation}\nonumber
\begin{aligned}
|\mathcal{R}(c, \Delta c)-\mathcal{R}(\tilde c_0,\Delta \tilde c_0)|_{H^{1,1/4}(\Sigma_T)}  \leq & \,  |\mathcal{R}(c,\Delta c)|_{H^{1,1/4}(\Sigma_T)} + |\mathcal{R}(\tilde c_0,\Delta \tilde c_0)|_{H^{1,1/4}(\Sigma_T)} \\
\leq &\, C(\mathcal{R}) \left(|c|_{H^{1,1/4}(\Sigma_T)} + |\Delta c|_{H^{1,1/4}(\Sigma_T)}\right) + \sqrt{T}C(\tilde c_0) \\
\leq & \,  C \left(|c - \tilde c_0|_{H^{1,1/4}(\Sigma_T)} +  |\Delta c- \Delta \tilde c_0|_{H^{1,1/4}(\Sigma_T)}\right) + \sqrt{T}C(\tilde c_0) \\
\leq & \, C \left(|\bar b|_{H^{1,1/4}(\Sigma_1)} +  |\Delta \bar b|_{H^{1,1/4}(\Sigma_1)}\right) + \sqrt{T}C(\tilde c_0) \\
\leq & \, C \|\bar b\|_{H^{3+1/2,7/8}(\Omega_1)} +\sqrt{T}C(\tilde c_0) \\
\leq & \, C\| c_0-\tilde c_0\|_{H^{2}(\Omega)} + \eta(c_0, \tilde c_0,T),
\end{aligned}
\end{equation}
where we have used (\ref{ass:Rderivatives}), Theorem \ref{thm:LMnormalReg}, and that if a function $F$ is Lipschitz continuous, then $|F(v)|_{H^1(\Gamma)} \leq C(F) (|v|_{H^1(\Gamma)}+1)$, which follows from a flattening argument. As in Step 1, these estimates, in conjunction with Theorem \ref{thm:SPEwithBC} (for $k=1/2$) applied to (\ref{pde:CHRtransExt}) and the trace Theorem \ref{thm:traceSobo}, conclude the theorem.
\end{proof}

\section*{Acknowledgments} This paper is part of the author's Ph.D. thesis at Carnegie Mellon University under the direction of Irene Fonseca and Giovanni Leoni. The author is deeply indebted to these two for expert guidance on many mathematical topics and for their many hours spent watching the author point at PDFs with an emphatic cursor. Furthermore, the author is thankful for their many helpful remarks on the organization of the paper and spotting a variety of typos, which lead to an immensely improved paper. The author was partially supported by National Science Foundation Grants DMS-1411646, DMS-1714098, and DMS-1906238.

\bibliographystyle{amsplain}
\bibliography{kstinson_CHRfp_3d}

%%%%%%%%%%%%%%%%%%%%
%%%%%%%%END of DOC%%%%%%%%%%
\end{document}